\documentclass[11pt]{article}

\usepackage[a4paper,margin=1in]{geometry}
\usepackage{amsmath,amssymb,amsthm,mathtools}
\usepackage{hyperref}
\hypersetup{hidelinks}
\usepackage{enumitem}
\usepackage[numbers,sort&compress]{natbib}

\newtheorem{theorem}{Theorem}[section]
\newtheorem{proposition}[theorem]{Proposition}
\newtheorem{lemma}[theorem]{Lemma}
\newtheorem{corollary}[theorem]{Corollary}
\newtheorem{remark}[theorem]{Remark}

\newcommand{\dd}{\,\mathrm d}
\newcommand{\E}{\mathbb E}
\newcommand{\Prob}{\mathbb P}
\newcommand{\R}{\mathbb R}
\newcommand{\dom}{\operatorname{dom}}

\title{High-energy asymptotics for finite-interval Schr\"odinger operators with
Gaussian white-noise potential}
\author{
Yingdu Dong\thanks{Affiliation: School of Mathematical Sciences, Fudan University, Shanghai, China. Email: 202131130009@mail.bnu.edu.cn.}
\and
Wenwen Jian\thanks{Affiliation: School of Mathematics, Physics and Statistics, Shanghai Polytechnic University, Shanghai, 201209, China. Email: wwjian@sspu.edu.cn. Funding: NNSFC(11201392).}
\and
Xiaoping Yuan\thanks{Affiliation: School of Mathematical Sciences, Fudan University, Shanghai, China. Email: xpyuan@fudan.edu.cn. Funding:NNSFC(12371189).}
}
\date{}

\begin{document}
\maketitle

\begin{abstract}
We study the one-dimensional Schr\"odinger operator on a fixed interval with
Gaussian white-noise potential,
\[
    H_\omega=-\frac{\dd^2}{\dd x^2}+\rho\dot B_x(\omega),
\]
under Dirichlet boundary conditions. The operator is defined pathwise through
the quasi-derivative realization of Sturm--Liouville operators with
distributional potentials. Let $\lambda_n$ be the Dirichlet eigenvalues,
$\lambda_n^+=\max\{\lambda_n,0\}$, and $k_n=\sqrt{\lambda_n^+}$. For every
finite $p$, we prove the high-energy expansion
\[
    k_n=\frac{n\pi}{L}
    +\frac{\rho}{n\pi}\int_0^L
    \sin^2\left(\frac{n\pi s}{L}\right)\,\dd B_s
    +O_{L^p(\Omega)}(n^{-2}).
\]
Consequently, almost surely, $\lambda_n>0$ for all sufficiently large $n$ and,
for every $\varepsilon>0$,
\[
    k_n=\frac{n\pi}{L}+O(n^{-1+\varepsilon}).
\]
We also obtain first-order eigenfunction asymptotics with explicit Brownian
stochastic-integral corrections. In particular, for the $L^2(0,L)$-normalized
Dirichlet eigenfunction $\varphi_n$, with a fixed sign convention,
\[
    \sup_{0\le x\le L}
    \left|\varphi_n(x)-\sqrt{\frac{2}{L}}\sin(k_n x)\right|
    =O(n^{-1+\varepsilon})
\]
almost surely. The proofs use stochastic Pr\"ufer coordinates, stochastic
Volterra expansions, the Burkholder--Davis--Gundy inequality, and a
Borel--Cantelli argument. The estimates provide a first step toward KAM-type small-divisor analysis for
Hamiltonian PDEs with white-noise spatial potentials.
\end{abstract}

\section{Introduction}

Schr\"odinger operators with random potentials form one of the basic mathematical models for quantum motion in disordered media. Singular random potentials naturally appear when a disordered medium is observed on a scale much larger than its correlation length. Let
\(
    V=(V(x))_{x\in\R}
\)
be a real-valued, mean-zero, stationary random field on \(\R\), defined on a probability space, and assume that \(\E|V(0)|^2<\infty\). Here a random field means a jointly measurable family of random variables indexed by the spatial variable, and stationarity means that its finite-dimensional distributions are invariant under spatial translations. The square-integrability assumption ensures that the covariance function
\[
    R(r):=\E[V(r)V(0)]
\]
is well defined; by stationarity, \(\E[V(x)V(y)]=R(x-y)\). Assume further that \(R\in L^1(\R)\). For the rescaled field
\[
    V_\varepsilon(x):=\varepsilon^{-1/2}V(x/\varepsilon),
    \qquad 0<\varepsilon\ll1,
\]
one has
\[
    \E[V_\varepsilon(x)V_\varepsilon(y)]
    =\varepsilon^{-1}R\left(\frac{x-y}{\varepsilon}\right).
\]
Thus, by the elementary approximate-identity argument, the covariance kernels converge in the sense of distributions on \(\R^2\) to a multiple of the delta kernel:
\[
    \varepsilon^{-1}R\left(\frac{x-y}{\varepsilon}\right)
    \longrightarrow
    \left(\int_\R R(r)\,\dd r\right)\delta(x-y).
\]
Equivalently, for test functions \(\varphi,\psi\in C_c^\infty(\R)\),
\[
\begin{aligned}
    \E\left[\left(\int_\R V_\varepsilon(x)\varphi(x)\,\dd x\right)
    \left(\int_\R V_\varepsilon(y)\psi(y)\,\dd y\right)\right]
    &\longrightarrow
    \left(\int_\R R(r)\,\dd r\right)
    \int_\R \varphi(x)\psi(x)\,\dd x .
\end{aligned}
\]
This identifies the limiting covariance structure. Under suitable additional mixing and moment assumptions, the corresponding convergence of the random fields is obtained by applying a functional central limit theorem to the additive functional
\[
    W_\varepsilon(x):=\int_0^x V_\varepsilon(s)\,\dd s
    =\sqrt{\varepsilon}\int_0^{x/\varepsilon}V(r)\,\dd r .
\]
The classical i.i.d. prototype is Donsker's invariance principle; see, for example, Billingsley \cite{Billingsley1999}. For continuous-parameter stationary processes, invariance principles for additive functionals under strong mixing-type assumptions go back to Davydov \cite{Davydov1968}; related strong-mixing functional central limit theorems are discussed in \cite{DoukhanMassartRio1994}. For spatial integrals of stationary mixing random fields, see \cite{KampfSpodarev2018}. Under such assumptions one obtains, on compact intervals in the spatial variable,
\[
    \left(\sqrt{\varepsilon}\int_0^{x/\varepsilon}V(r)\,\dd r\right)_{x\ge0}
    \Longrightarrow
    \sigma_{\mathrm{eff}}\widetilde B_x,
    \qquad
    \sigma_{\mathrm{eff}}^2=\int_\R R(r)\,\dd r,
\]
where \(\widetilde B\) is a standard Brownian motion. Equivalently,
\[
    W_\varepsilon(x):=\int_0^x V_\varepsilon(s)\,\dd s
    \Longrightarrow \sigma_{\mathrm{eff}}\widetilde B_x,
\]
and therefore \(V_\varepsilon\) converges to \(\sigma_{\mathrm{eff}}\dot{\widetilde B}\) in the sense of random distributions. Hence Gaussian white noise may be viewed as the limiting model for a centered spatial disorder whose correlation length tends to zero.

This limiting picture motivates the one-dimensional white-noise Schr\"odinger equation
\[
    -y''+\rho \dot B\,y=\lambda y,
\]
where \(B\) is Brownian motion and \(\dot B\) denotes Gaussian white noise. More precisely, Gaussian white noise on \(\R\) is the centered Gaussian random distribution \(\xi\) characterized by
\[
    \E\langle \xi,\varphi\rangle\langle \xi,\psi\rangle
    =\int_\R \varphi(x)\psi(x)\,\dd x,
    \qquad \varphi,\psi\in C_c^\infty(\R),
\]
and it may be realized as the distributional derivative of a two-sided Brownian motion. In the present paper the spatial domain is the fixed interval \((0,L)\). We work on a probability space \((\Omega,\mathcal F,\Prob)\) carrying a standard Brownian motion \(B=(B_x)_{0\le x\le L}\); here \(\Omega\) is the Brownian sample space, and \(\omega\in\Omega\) denotes a realization of the Brownian path. This restriction changes the spectral problem from an infinite-volume one to a finite-interval boundary value problem, but it does not change the local covariance structure inherited from the scaling picture above. For test functions supported in \((0,L)\), the covariance is still given by the \(L^2\)-inner product; equivalently, the potential is still delta-correlated in the spatial variable. The finite-interval model is therefore obtained by restricting the white noise to \([0,L]\), writing its primitive as \(\rho B\), and interpreting the resulting equation through the quasi-derivative formalism described below. Similar scale-separation ideas also appear at the level of equations with large or rapidly oscillating random potentials: one studies the limiting effective models obtained after averaging or rescaling the random medium; see the review \cite{BalGuReview}.

For general background on random Schr\"{o}dinger operators, we refer the reader to \cite{CarmonaLacroix,PasturFigotin,Kirsch2008}. To the best of our knowledge, one of the first rigorous studies of random Schr\"{o}dinger operators with white-noise potential is due to Fukushima and Nakao~\cite{FukushimaNakao}. Since then, such operators have been investigated from several viewpoints, including the density of states, localization, large-volume spectral statistics, and related half-line models; see, for instance, \cite{Thompson1983,DumazLabbe2020,DumazLabbe2023,DumazLabbe2024,DumazLabbe2024Crossover,Minami,RRV}. These works mainly concern infinite-volume, density-of-states, localization, large-volume spectral-statistics, or half-line questions. The problem studied here is different in nature: the interval length is fixed and the high-energy index tends to infinity. The aim is therefore to obtain finite-interval eigenvalue and eigenfunction asymptotics in which both the leading stochastic correction and the probabilistic size of the remainders are explicit.

The pathwise realization of this finite-interval problem relies on the deterministic theory of Sturm--Liouville operators with distributional potentials. We recall the relevant formulation. If a real distribution \(q\) belongs to \(H^{-1}(0,L)\), it can be written as \(q=\sigma'\) with \(\sigma\in L^2(0,L)\), and the differential expression is defined by the quasi-derivative
\[
    y^{[1]}=y'-\sigma y,
    \qquad
    \ell_\sigma y=-(y^{[1]})'-\sigma y^{[1]}-\sigma^2y.
\]
For white noise with strength \(\rho\), the primitive is \(\sigma=\rho B\), and hence \(q=\sigma'=\rho\dot B\) in the distributional sense. Since Brownian paths are continuous on \([0,L]\), one has \(\sigma\in L^2(0,L)\) almost surely, so the deterministic distributional Sturm--Liouville theory applies pathwise; see, for example, \cite{HrynivMykytyuk2006,SavchukShkalikov,HrynivMykytyuk2012}. Therefore, under self-adjoint boundary conditions, the operator has discrete spectrum. The question addressed below is how the delta-correlated disorder is reflected in the high-energy asymptotics of this finite-interval spectrum and its eigenfunctions.

It is important to distinguish this pathwise random-operator problem from the eigenvalue problem for stochastic Hamiltonian systems with boundary conditions studied by Peng \cite{Peng2000}. In Peng's setting, the eigenvalues are deterministic parameters for which a forward-backward stochastic Hamiltonian system admits a non-trivial adapted solution. In the present paper, after fixing a Brownian path one obtains a self-adjoint Sturm--Liouville operator \(H_\omega\), and the eigenvalues \(\lambda_n(\omega)\) are random variables. Thus the high-energy asymptotics below concern random eigenvalues of a finite-interval random operator, rather than deterministic spectral parameters of an adapted FBSDE boundary value problem.

We next clarify the relation between the present work and the existing deterministic theory. The self-adjointness and compact-resolvent properties used below are standard consequences of the theory of Sturm--Liouville operators with distributional potentials. Moreover, eigenvalue asymptotics for broad classes of singular potentials are already known; see, for example, \cite{HrynivMykytyukJFA2006,EGNT}. These deterministic results provide the natural operator-theoretic framework for the white-noise model considered here. However, the specialization to Gaussian white-noise potentials has additional probabilistic structure which is not captured by the deterministic theory alone. In this setting, the primitive of the potential is Brownian motion, and the leading oscillatory corrections can be written explicitly as Brownian stochastic integrals. This makes it possible to use probabilistic tools, in particular the Burkholder--Davis--Gundy inequality, to obtain quantitative $L^p(\Omega)$ estimates and almost-sure estimates for the random remainders. The aim of the present paper is therefore not merely to rederive deterministic high-energy asymptotics in a special case, but to refine them into explicit stochastic Pr\"ufer expansions for the eigenvalues and eigenfunctions of the finite-interval white-noise Schr\"odinger operator. More precisely, the new information is the explicit Brownian stochastic-integral correction, its $L^p(\Omega)$ control, and the resulting almost-sure high-energy bounds.

We now state the main results in a form convenient for reference. Throughout the following statements, $(\lambda_n)_{n\ge1}$ denotes the Dirichlet eigenvalues, listed increasingly, and
\[
    \lambda_n^+:=\max\{\lambda_n,0\},\qquad
    k_n:=\sqrt{\lambda_n^+},\qquad
    K_n:=\frac{n\pi}{L}.
\]
The convention $k_n=\sqrt{\lambda_n^+}$ is harmless at high energy, since, almost surely, $\lambda_n>0$ for all sufficiently large $n$.

\begin{theorem}[Eigenvalue asymptotics]\label{thm:intro-eigenvalue}
Set
\[
    M_n:=\int_0^L \sin^2(K_ns)\,\dd B_s.
\]
Then, for every finite $p\ge1$,
\[
    k_n
    =K_n+\frac{\rho}{n\pi}M_n+O_{L^p(\Omega)}(n^{-2}),
    \qquad n\to\infty.
\]
Moreover,
\[
    \lambda_n
    =K_n^2+\frac{2\rho}{L}M_n+O_{L^p(\Omega)}(n^{-1}).
\]
In particular, using $\sin^2(K_ns)=1/2-\cos(2K_ns)/2$,
\[
    \lambda_n
    =\left(\frac{n\pi}{L}\right)^2
    +\frac{\rho}{L}B_L
    -\frac{\rho}{L}
    \int_0^L\cos\left(\frac{2n\pi s}{L}\right)\,\dd B_s
    +O_{L^p(\Omega)}(n^{-1}).
\]
\end{theorem}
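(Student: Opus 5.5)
Our approach is a stochastic Pr\"ufer transformation adapted to the quasi-derivative. For $\lambda=k^2$ with $k>0$, let $y(\cdot,k)$ solve $\ell_\sigma y=k^2y$ with $y(0)=0$ and $y^{[1]}(0)=1$. The quasi-derivative system is
\[
y'=y^{[1]}+\sigma y,\qquad (y^{[1]})'=-\sigma y^{[1]}-(\sigma^2+k^2)y,
\]
with $\sigma=\rho B$. Write $y=r\sin\theta/k$ and $y^{[1]}=r\cos\theta$. Differentiating (pathwise; $\sigma$ is continuous, so these are ordinary ODEs) gives
\[
\theta' = k+\sigma\sin 2\theta+\frac{\sigma^2}{k}\sin^2\theta,
\]
together with $\theta(0)=0$. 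The $n$-th Dirichlet eigenvalue with $\lambda_n>0$ is characterized by $\theta(L,k_n)=n\pi$; this is the standard oscillation theory for distributional Sturm--Liouville operators, with monotonicity of $\theta(L,\cdot)$ in $k$. A preliminary step is needed because of the negative eigenvalues: I will show that $\lambda_n\ge K_n^2-C(\omega)$ with $C\in L^p$, using a quadratic-form lower bound $\|\sigma\|_\infty$-type estimate. This lets us work with $k$ near $K_n$ for large $n$. The finitely many small $n$ are absorbed into the $O_{L^p}$ constants, using that moments of $\sup|B|$ are finite.

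Next I remove the term $\sigma\sin2\theta$, which is not small pointwise, by integrating by parts against the oscillation. Put $\theta(x)=kx+\eta(x)$. Writing $\sigma\sin2\theta$ with $\sigma=\rho B$, I use Itô/Stieltjes integration by parts in the form
\[
\int_0^x B_s\sin(2ks+2\eta)\,ds=\Big[-B_s\tfrac{\cos(2ks+2\eta)}{2k}\Big]_0^x+\frac1{2k}\int_0^x\cos(2ks+2\eta)\,(dB_s+2B_s\eta'\,ds).
\]
This exhibits $\eta=O(\sup|B|/k)$ uniformly. A second Volterra iteration, replacing $\eta$ by $0$ inside the integrands at cost $O(k^{-2})$, yields
\[
\eta(L)=-\frac{\rho B_L\cos(2kL)}{2k}+\frac{\rho}{2k}\int_0^L\cos(2ks)\,dB_s+R(k),
\]
where $R$ collects terms of the form $k^{-2}\times$ polynomials in $\sup|B|$ and in stochastic integrals. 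A sign check against the first-order formula is needed here. At $k=K_n$ the boundary term becomes $-\rho B_L/(2k)$. The combination $-\frac{\rho}{2k}\int(1-\cos 2ks)\,dB_s=-\frac{\rho}{k}M_n$ then appears with the correct sign, so that $\theta(L,k)=n\pi$ gives $kL=n\pi+\frac{\rho}{k}M_n+\dots$. That is, $k_n=K_n+\frac{\rho}{K_nL}M_n+\dots=K_n+\frac{\rho}{n\pi}M_n+\dots$.

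The main obstacle is that the stochastic integrals are evaluated at the random point $k=k_n$, which is not adapted, so the Burkholder--Davis--Gundy inequality cannot be applied directly there. I would handle this in two steps. First, I would prove bounds uniform in $k$ over the window $|k-K_n|\le 1$. For the process $k\mapsto\int_0^L e^{2iks}\,dB_s$ I apply BDG to its $k$-derivative, which gives moments of order $L^p$ uniformly, and then a Sobolev/Kolmogorov embedding in $k$ gives $\E\sup_{|k-K_n|\le1}|\cdot|^p\le C_p$. Second, I would linearize. We have $\partial_k\theta(L,k)=L+O(k^{-1}\cdot\text{random})$, and the Taylor remainder when replacing $k_n$ by $K_n$ in the $O(1/k)$ terms costs $|k_n-K_n|/k=O(n^{-2})$ in $L^p$. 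Combining these via Hölder's inequality on products of random factors gives the first expansion. The $\lambda_n$ formula follows by squaring: $\lambda_n=K_n^2+2K_n\frac{\rho}{n\pi}M_n+O(n^{-1})=K_n^2+\frac{2\rho}{L}M_n+O(n^{-1})$. The final form uses $\sin^2=\frac12-\frac12\cos$ and $\int_0^L dB=B_L$.
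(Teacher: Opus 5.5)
Your overall architecture is the paper's: expand the phase at the deterministic frequency $K_n$, control $\partial_k\theta(L,\cdot)-L$ on $[K_n-1,K_n+1]$, linearize at the random root, and discard a small-probability bad event. The trouble comes from your choice of the pathwise quasi-derivative Pr\"ufer angle, where the noise enters through the order-one oscillatory drift $\sigma\sin2\theta$. The paper instead uses the It\^o variables $(Y,P=Y')$, where the noise enters as $-\frac{\rho}{k}\sin^2\theta\,\dd B$, so a single BDG--Volterra step gives $-\frac{\rho}{k}M_x(k)$ with no integration by parts. Your choice leaves two steps unjustified.

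\textbf{First step: the expansion of $\eta(L)$.} The correct identity is
\[
\int_0^x B_s\sin2\theta_s\,\dd s=-\frac{B_x\cos2\theta_x}{2k}+\frac{1}{2k}\int_0^x\cos2\theta_s\,\dd B_s-\frac1k\int_0^x B_s\eta_s'\sin2\theta_s\,\dd s,
\]
not the cosine correction you wrote. Since $\eta'=\rho B\sin2\theta+O(k^{-1})$ is of order one, the last term is of exact order $k^{-1}$: it contributes $-\frac{\rho^2}{2k}\int_0^LB_s^2\,\dd s$ plus oscillatory terms to $\eta(L)$. The drift $\frac{\sigma^2}{k}\sin^2\theta$, which you never treat, contributes $+\frac{\rho^2}{2k}\int_0^LB_s^2\,\dd s$ plus oscillatory terms. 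Your formula for $\eta(L)$ is correct only because these two non-oscillatory terms cancel. What is left is $\frac{\rho^2}{2k}\int_0^LB_s^2(\cos4\theta_s-\cos2\theta_s)\,\dd s$, which needs one more integration by parts (with It\^o's formula for $B^2$) to become $O_{L^p}(k^{-2})$. ``Replacing $\eta$ by $0$ at cost $O(k^{-2})$'' would discard order-$k^{-1}$ terms, so the claimed form of $R(k)$ is not established as written.

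\textbf{Second step: the linearization.} It rests on $\sup_{|k-K_n|\le1}|\partial_k\theta(L,k)-L|=O_{L^p}(n^{-1})$, which you assert without proof. A Kolmogorov bound for the Gaussian process $k\mapsto\int_0^Le^{2iks}\,\dd B_s$ does not give it, because $\partial_k\theta$ involves adapted integrands that depend on $k$ through $\theta(\cdot,k)$. In your variables $\zeta=\partial_k\theta$ solves $\zeta'=1+\bigl(2\sigma\cos2\theta+\frac{\sigma^2}{k}\sin2\theta\bigr)\zeta-\frac{\sigma^2}{k^2}\sin^2\theta$, whose coefficient $2\sigma\cos2\theta$ is of order one. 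So even the fixed-$k$ bound $\zeta_L-L=O(k^{-1})$ needs an oscillatory integration by parts inside $\exp\int_s^x(\cdots)$. Uniformity over the window further needs a fixed-$k$ bound on $\partial_k^2\theta$ together with a Sobolev inequality in $k$. This is exactly Lemma~\ref{lem:phase-uniform}, which in the $(Y,P)$ variables is a routine BDG-plus-absorption argument.

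\textbf{Preliminary localization.} Your preliminary step is also misstated. The form argument gives only $\lambda_n\ge K_n^2-2\|\sigma\|_\infty K_n$. An $n$-independent $C(\omega)$ cannot exist: by the expansion being proved, $\lambda_n-K_n^2=\frac{\rho}{L}B_L-\frac{\rho}{L}\int_0^L\cos(2K_ns)\,\dd B_s+o(1)$ a.s., and these integrals are i.i.d.\ $N(0,L/2)$. The step is also unnecessary. Once $\partial_k\theta_L\ge L/2$ on the window and $|\theta_L(K_n)-n\pi|<L/2$, the intermediate value theorem and zero counting place $k_n$ in the window. The complementary event is then removed using the rough moments of Lemma~\ref{lem:rough-moment} and H\"older's inequality, as in the paper.
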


\begin{corollary}[Almost-sure eigenvalue bounds]\label{cor:intro-eigenvalue-as}
For every $\varepsilon>0$,
\[
    k_n=\frac{n\pi}{L}+O(n^{-1+\varepsilon})
    \qquad\text{almost surely}.
\]
Consequently,
\[
    \lambda_n=\left(\frac{n\pi}{L}\right)^2+O(n^\varepsilon)
    \qquad\text{almost surely}
\]
for every $\varepsilon>0$.
\end{corollary}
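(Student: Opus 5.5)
We derive the corollary from Theorem~\ref{thm:intro-eigenvalue} by a moment bound followed by a Borel--Cantelli argument. Fix $\varepsilon>0$. We write
\[
    k_n-K_n=\frac{\rho}{n\pi}M_n+R_n,
\]
where, for each finite $p$, the theorem gives $\|R_n\|_{L^p(\Omega)}\le C_p n^{-2}$.

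\emph{The Gaussian term.} Since $M_n=\int_0^L\sin^2(K_ns)\,\dd B_s$ is a Wiener integral of a deterministic function bounded by $1$, it is centered Gaussian with variance $\int_0^L\sin^4(K_ns)\,\dd s\le L$. Hence $\E|M_n|^p\le C_pL^{p/2}$ uniformly in $n$; the Burkholder--Davis--Gundy inequality gives the same bound. By Markov's inequality,
\[
    \Prob\bigl(|M_n|>n^{\varepsilon}\bigr)\le C_pL^{p/2}n^{-p\varepsilon}.
\]
We choose $p>1/\varepsilon$, so that this is summable in $n$. Borel--Cantelli then gives, almost surely, $|M_n|\le n^\varepsilon$ for all large $n$. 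Consequently $\frac{\rho}{n\pi}M_n=O(n^{-1+\varepsilon})$ almost surely.

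\emph{The remainder.} By Markov's inequality with $p=2$,
\[
    \Prob\bigl(|R_n|>n^{-1}\bigr)\le n^{2}\E|R_n|^2\le C n^{2}n^{-4}=Cn^{-2},
\]
which is summable. So, almost surely, $|R_n|\le n^{-1}\le n^{-1+\varepsilon}$ eventually. Combining the two estimates on the intersection of the two full-measure events gives $k_n=K_n+O(n^{-1+\varepsilon})$ almost surely. The full-measure set depends on $\varepsilon$. We make it uniform by intersecting over $\varepsilon=1/m$, $m\in\mathbb N$, which is still a countable intersection of full-measure events.

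\emph{Consequence for $\lambda_n$.} Since $k_n\to\infty$ almost surely, $\lambda_n^+>0$ eventually, so $\lambda_n=k_n^2$ for large $n$. This also recovers the claim that $\lambda_n>0$ eventually. Then
\[
    \lambda_n-K_n^2=(k_n-K_n)(k_n+K_n)=O(n^{-1+\varepsilon})\cdot O(n)=O(n^{\varepsilon}).
\]

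\emph{Possible difficulty.} There is no real obstacle here. The only care needed is that the implied constants in $O_{L^p}$ are uniform in $n$ for each fixed $p$, which is how Theorem~\ref{thm:intro-eigenvalue} is stated. We also need $p$ to be taken large depending on $\varepsilon$ for the Gaussian term; this is possible because the theorem holds for every finite $p$.
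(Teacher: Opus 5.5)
Your proof is correct and follows essentially the same route as the paper: you use the decomposition $k_n-K_n=\frac{\rho}{n\pi}M_n+R_n$ from the theorem, Markov's inequality with $p>1/\varepsilon$ plus Borel--Cantelli, and the factorization $\lambda_n-K_n^2=(k_n-K_n)(k_n+K_n)$. The only differences are minor and both valid: you treat $R_n$ with $p=2$ at threshold $n^{-1}$, and you obtain $\lambda_n>0$ eventually directly from $k_n=\sqrt{\lambda_n^+}\to\infty$ instead of citing Lemma~\ref{lem:rough-moment}.
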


The proof of Theorem~\ref{thm:intro-eigenvalue} and Corollary~\ref{cor:intro-eigenvalue-as} is obtained from a stochastic Pr\"ufer transformation. After writing $\lambda=k^2$, the Dirichlet eigenvalue condition becomes a phase condition for the lifted Pr\"ufer angle. Its high-energy expansion gives the displayed stochastic correction, and the almost-sure bound follows from the corresponding finite-moment estimates and the Borel--Cantelli lemma.

We next state the corresponding eigenfunction asymptotics. For deterministic $k>0$, let $u_k$ be the Dirichlet initial solution normalized by
\[
    u_k(0)=0,\qquad u_k'(0)=k.
\]
It satisfies the stochastic Volterra equation
\[
    u_k(x)=\sin(kx)+\frac{\rho}{k}
    \int_0^x \sin(k(x-s))u_k(s)\,\dd B_s.
\]

\begin{theorem}[Eigenfunction asymptotics]\label{thm:intro-eigenfunction}
For every finite $p\ge2$,
\[
\left\|
    \sup_{0\le x\le L}\left|
    u_k(x)-\sin(kx)
    -\frac{\rho}{k}
    \int_0^x\sin(k(x-s))\sin(ks)\,\dd B_s
    \right|
\right\|_{L^p(\Omega)}
    \le C_p k^{-2},
    \qquad k\ge k_0.
\]
For the eigenvalue sequence, define the deterministic-frequency stochastic integral
\[
    I_n(x):=\int_0^x
    \sin\left(K_n(x-s)\right)\sin(K_ns)\,\dd B_s.
\]
Then, for every $\varepsilon>0$,
\[
    \sup_{0\le x\le L}
    \left|u_{k_n}(x)-\sin(k_nx)-\frac{\rho}{k_n}I_n(x)\right|
    =O(n^{-2+\varepsilon})
    \qquad\text{almost surely},
\]
and hence
\[
    \sup_{0\le x\le L}|u_{k_n}(x)-\sin(k_nx)|
    =O(n^{-1+\varepsilon})
    \qquad\text{almost surely}.
\]
Furthermore, let $\varphi_n$ be the $L^2(0,L)$-normalized Dirichlet eigenfunction associated with $\lambda_n$, with the sign chosen so that $\varphi_n'(0)>0$. Put
\[
    s_n(x):=\sin(k_nx),
    \qquad
    A_n:=\int_0^L s_n(t)I_n(t)\,\dd t.
\]
Then, for every $\varepsilon>0$,
\[
\begin{aligned}
    \sup_{0\le x\le L}\Bigg|
    \varphi_n(x)-\sqrt{\frac{2}{L}}
    \bigg[
    s_n(x)+\frac{\rho}{k_n}
    \bigg(I_n(x)-\frac{2}{L}A_ns_n(x)\bigg)
    \bigg]\Bigg|
    =O(n^{-2+\varepsilon})
\end{aligned}
\]
almost surely. In particular,
\[
    \sup_{0\le x\le L}
    \left|\varphi_n(x)-\sqrt{\frac{2}{L}}\sin(k_nx)\right|
    =O(n^{-1+\varepsilon})
    \qquad\text{almost surely}.
\]
\end{theorem}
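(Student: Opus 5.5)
The plan is to iterate the stochastic Volterra equation once and bound the second-order remainder with the Burkholder--Davis--Gundy inequality. The almost-sure statements along the eigenvalue sequence then follow from Theorem~\ref{thm:intro-eigenvalue}, Corollary~\ref{cor:intro-eigenvalue-as}, a frequency-replacement argument, and Borel--Cantelli.

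\textbf{Step 1: $L^p$ bounds for $u_k$.} Write $v_k(x)=(u_k(x),u_k'(x)/k)$. Using $\sin(k(x-s))=\sin kx\cos ks-\cos kx\sin ks$, the stochastic integral splits into products of bounded deterministic functions of $x$ with It\^o integrals $\int_0^x \cos(ks)u_k\,\dd B_s$ and $\int_0^x \sin(ks)u_k\,\dd B_s$. These are martingales in $x$. BDG and Gronwall then give $\E\sup_x|u_k|^p\le C_p$ uniformly in $k\ge k_0$.

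\textbf{Step 2: first iteration.} Substituting the Volterra equation into itself,
\[
u_k-\sin(k\cdot)-\tfrac{\rho}{k}\textstyle\int_0^x\sin(k(x-s))\sin(ks)\,\dd B_s
=\tfrac{\rho}{k}\textstyle\int_0^x\sin(k(x-s))\,R_k(s)\,\dd B_s,
\]
where $R_k:=u_k-\sin(k\cdot)=\tfrac{\rho}{k}\int_0^{\cdot}\sin(k(\cdot-s))u_k\,\dd B_s$. By the same splitting and BDG, together with Step 1, $\|\sup|R_k|\|_{L^p}\le C_p k^{-1}$. Splitting once more gives $\|\sup_x|\cdots|\|_{L^p}\le C_p k^{-2}$. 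This is the first claim.

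\textbf{Step 3: the eigenvalue sequence.} The difficulty is that $k_n$ is random and not adapted, so the bound of Step 2 cannot be applied at $k=k_n$ directly. I would handle this by replacing the random frequency with deterministic ones.

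First, on the event $|k_n-K_n|\le n^{-1+\varepsilon}$, which holds eventually almost surely by Corollary~\ref{cor:intro-eigenvalue-as}, compare $u_{k_n}$ with $u_k$ for deterministic $k$ in a grid around $K_n$ of mesh $n^{-3}$. This uses a Kolmogorov/Lipschitz-in-$k$ estimate: differentiate the Volterra equation in $k$ and apply BDG, which gives $\E\sup_x|\partial_k u_k|^p\le C$ (the derivative picks up a factor $x\le L$). There are polynomially many grid points, and Chebyshev with large $p$ plus Borel--Cantelli makes the Step 2 remainder $O(n^{-2+\varepsilon})$ almost surely, uniformly over the grid. Interpolating between grid points then covers $k=k_n$, provided the Lipschitz constant in $k$ is almost surely $O(n^{\varepsilon})$, which I would obtain from a sup-over-$k$ moment bound via Kolmogorov continuity.

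Second, replace $\tfrac{\rho}{k_n}\int_0^x\sin(k_n(x-s))\sin(k_ns)\,\dd B_s$ by $\tfrac{\rho}{k_n}I_n(x)$. Integrating by parts in $s$ makes the stochastic integral a pathwise continuous function of the frequency, with $k$-derivative bounded by $C\sup|B|$. Hence the error is $O(k_n^{-1}|k_n-K_n|)=O(n^{-2+\varepsilon})$ almost surely. Since $\E\sup_x|I_n|^p\le C$, Borel--Cantelli gives $\sup_x|I_n|=O(n^{\varepsilon})$, which yields the $O(n^{-1+\varepsilon})$ statement.

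\textbf{Step 4: normalization.} Set $\varphi_n=u_{k_n}/\|u_{k_n}\|_{L^2}$; its sign is correct because $u'_{k_n}(0)=k_n>0$. Expand
\[
\|u_{k_n}\|^2=\|s_n\|^2+\tfrac{2\rho}{k_n}A_n+O(n^{-2+\varepsilon}).
\]
Since $k_n=K_n+O(n^{-1+\varepsilon})$ almost surely, we have $\|s_n\|^2=L/2+O(n^{-2+\varepsilon})$, because the $\sin(2k_nL)$ term is $O(n^{-1+\varepsilon})$ divided by $k_n$. Taylor expansion of the inverse square root then produces the $-\tfrac{2}{L}A_ns_n$ correction.

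The main obstacle is Step 3, specifically controlling $u_k$ uniformly in the random frequency $k_n$, through the $k$-regularity moment bounds and the grid argument.
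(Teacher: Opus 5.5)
\textbf{The gap: replacing $I_{k_n}$ by $I_n$ in Step 3.} Steps 1, 2 and 4 are essentially the paper's Proposition~\ref{prop:eigenfunction} (you use Gronwall where the paper absorbs for large $k$) and Corollary~\ref{cor:normalized}. The grid-plus-Lipschitz interpolation in the first half of Step 3 is a workable alternative route. The problem is the second half, where you replace $\frac{\rho}{k_n}I_{k_n}(x)$ by $\frac{\rho}{k_n}I_n(x)$, with $I_k(x):=\int_0^x\sin(k(x-s))\sin(ks)\,\dd B_s$.

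The pathwise bound $|\partial_kI_k(x)|\le C\sup|B|$ that you claim is false. The kernel vanishes at $s=0$ and $s=x$, and its $s$-derivative is $k\sin(k(x-2s))$. Integration by parts therefore gives
\[
    I_k(x)=-k\int_0^x B_s\sin\bigl(k(x-2s)\bigr)\,\dd s .
\]
So the honest pathwise bound is $|\partial_kI_k(x)|\le (L+kL^2)\sup|B|$. The factor $k$ produced by differentiating the oscillating kernel is not recovered by any estimate in terms of $\sup|B|$. With this bound, your replacement error is $\frac{|\rho|}{k_n}\,Ck_n\sup|B|\,|k_n-K_n|=O(n^{-1+\varepsilon})$, a full order short of $O(n^{-2+\varepsilon})$. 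Hence neither the $O(n^{-2+\varepsilon})$ expansion of $u_{k_n}$ nor the normalized expansion built on it in Step 4 is established; only the cruder $O(n^{-1+\varepsilon})$ statements survive.

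That $I_k$ and $\partial_kI_k$ are of size $O(1)$ is a stochastic cancellation, visible through BDG at fixed $k$, not a pathwise fact. The bound you state is true for $\frac{\rho}{k}I_k$: there the prefactor $1/k$ compensates and the $k$-derivative is pathwise $O(\sup|B|)$, which is why your interpolation in the first half of Step 3 is fine. It is not true for $I_k$ itself.

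\textbf{How to repair it.} Use the same mechanism you already invoke for $u_k$. For fixed $k$,
\[
\partial_kI_k(x)=\int_0^x\bigl[(x-s)\cos(k(x-s))\sin(ks)+s\sin(k(x-s))\cos(ks)\bigr]\,\dd B_s ,
\]
and $\partial_k^2I_k$ likewise has a bounded deterministic kernel. Split the kernels into products of functions of $x$ and of $s$ and apply BDG to get $\|\sup_x|\partial_k^jI_k(x)|\|_{L^p(\Omega)}\le C_p$ for $j=1,2$, uniformly in $k$. The Sobolev inequality in $k$ on $\mathcal K_n=[K_n-1,K_n+1]$, followed by Markov and Borel--Cantelli, then gives $\sup_{k\in\mathcal K_n}\sup_x|\partial_kI_k(x)|=O(n^{\varepsilon})$ almost surely. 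Your replacement error becomes $O(n^{-2+2\varepsilon})$.

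\textbf{Comparison with the paper.} The paper avoids this comparison entirely: it never evaluates a stochastic integral at the random frequency. It writes $u_{k_n}-\sin(k_nx)-\frac{\rho}{k_n}I_n$ as the sum of three terms:
\begin{itemize}
\item the deterministic-frequency remainder at $K_n$;
\item $\int_{K_n}^{k_n}(\partial_ku_k(x)-x\cos(kx))\,\dd k$;
\item $\rho(K_n^{-1}-k_n^{-1})I_n(x)$.
\end{itemize}
It then proves the sharper uniform estimate $\sup_{k\in\mathcal K_n}\sup_x|\partial_ku_k(x)-x\cos(kx)|=O_{L^p}(n^{-1})$ (Lemma~\ref{lem:eigenfunction-uniform}). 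Your grid route needs only a crude $O(n^{\varepsilon})$ Lipschitz bound for $u_k$ in $k$; the union bound over $O(n^{2+\varepsilon})$ grid points is absorbed by taking $p$ large. In exchange, it must control $I_{k_n}-I_{K_n}$, which is exactly where the argument currently breaks.
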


The detailed asymptotic theorems are proved below for Dirichlet boundary conditions. General separated boundary conditions lead to the same stochastic Pr\"ufer equation but require additional boundary phase corrections; this is discussed briefly in the final section.

A further motivation for the present spectral analysis comes from infinite-dimensional KAM theory for Hamiltonian PDEs. In the classical KAM theory for nonlinear Schr\"odinger equations and wave equations, non-resonance of the linear frequencies is often produced by external parameters. A typical example is a convolution-type potential or diagonal Fourier multiplier, whose Fourier coefficients are treated as parameters; see, for instance, \cite{Kuksin1993,KuksinPoeschel1996}. Although such parameters are regarded as randomized, this randomness is mainly a device for selecting a good set of frequencies. The constant-potential case is still more rigid, since it essentially shifts the linear frequencies by the same amount. Both situations differ from the pathwise randomness of the white-noise operator studied here, where the Brownian path itself determines the whole linear spectrum. Another important line of work treats fixed deterministic multiplication potentials, where no adjustable external parameter is available; see, for example, Du and Yuan \cite{DuYuan2006} for nonlinear Schr\"odinger equations with a given potential, and Yuan \cite{Yuan2006WavePotential} for nonlinear wave equations with a prescribed potential. Compared with these deterministic-potential results, the white-noise problem that we ultimately have in mind is expected to be more difficult. The potential is distributional and random, the frequencies and eigenfunctions are generated simultaneously by a single Brownian path, and the small-divisor estimates cannot be reduced to deterministic spectral separation or to the exclusion of a simple external parameter set.

The long-term goal of the present program is to develop KAM theory for Schr\"odinger or wave equations whose spatial potential is Gaussian white noise. In such a problem the natural frequencies are the random eigenvalues of the pathwise operator. Thus small divisors involve random combinations such as
\[
    \lambda_i(\omega)+\lambda_j(\omega)-\lambda_k(\omega)-\lambda_l(\omega),
\]
and the coefficients in the Hamiltonian normal form also depend on the random eigenfunctions. The results proved in this paper give a first step in this direction: they identify the leading stochastic correction to the high-energy eigenvalues and give uniform eigenfunction asymptotics. However, these estimates are not yet sufficient for a full KAM analysis. For example, polynomial lower bounds for fourth-order divisors require higher-order joint eigenvalue expansions, small-ball estimates for the resulting Gaussian or non-Gaussian stochastic corrections, and summable control of the remainders over large families of almost-resonant integer quadruples.

A possible continuation is therefore to combine higher-order stochastic Volterra or Pr\"ufer expansions with probabilistic separation estimates for the random small divisors. In higher-order expansions one expects random fields indexed by the frequency parameter, and the substitution of random eigenfrequencies may lead either to anticipating stochastic integrals or to equivalent deterministic-frequency re-expansions. Clarifying this point, together with the required joint Gaussian estimates and eigenfunction coefficient bounds, is an essential step toward a KAM theorem with genuine white-noise spatial potential. The present paper isolates the spectral asymptotic component of this larger problem.

The paper is organized as follows. Section 2 introduces the white-noise distribution and its primitive, then recalls the quasi-derivative realization and the basic spectral consequences of self-adjointness and compact resolvent. In Section 3, we derive the stochastic Pr\"ufer equations and prove the high-energy expansion of the Pr\"ufer angle. Subsequently, we give the Dirichlet eigenvalue asymptotics with an explicit stochastic correction and the almost-sure estimate. Section 4 proves the eigenfunction expansion and its almost-sure consequences. Finally, in Section 5, we discuss the results of this paper and possible extensions.

\section{Preliminary}
In this section, we recall some standard facts used to formulate the white-noise operator pathwise.

\subsection{White noise as a distributional potential}

Let $(\Omega,\mathcal F,\Prob)$ be the Brownian probability space introduced in the Introduction, carrying a standard Brownian motion $(B_x)_{0\le x\le L}$. Fix $\rho\in\R$ and define
\[
    \sigma_\omega(x)=\rho B_x(\omega),\qquad 0\le x\le L.
\]
For each fixed $\omega$ outside a null set, $\sigma_\omega\in C[0,L]\subset L^2(0,L)$. We define the potential
\[
    q_\omega=\sigma_\omega'
\]
in the sense of distributions. Equivalently, for $\varphi\in C_c^\infty(0,L)$,
\[
    \langle q_\omega,\varphi\rangle
    =-\int_0^L \sigma_\omega(x)\varphi'(x)\,\dd x.
\]
Since $\sigma_\omega\in L^2(0,L)$,
\[
    |\langle q_\omega,\varphi\rangle|
    \le \|\sigma_\omega\|_{L^2(0,L)}\,\|\varphi'\|_{L^2(0,L)},
\]
so $q_\omega\in H^{-1}(0,L)$ almost surely. (Actually, $q_\omega$ belongs to $H^{-s}$ for $s>1/2$, but we do not need this result.)

One can check that the distribution defined above agrees with the usual Gaussian white noise. Indeed, for $\varphi\in C_c^\infty(0,L)$, It\^o integration yields
\begin{equation*}
    \int_0^L \varphi(x)\,\dd B_x
    =-\int_0^L B_x\varphi'(x)\,\dd x.
\end{equation*}
Therefore
\begin{equation*}
    \langle q_\omega,\varphi\rangle=\rho \int_0^L \varphi(x)\,\dd B_x,
\end{equation*}
and hence
\begin{equation*}
    \mathbb E\langle q_\omega,\varphi\rangle=0,
    \qquad
    \mathbb E\bigl[\langle q_\omega,\varphi\rangle\langle q_\omega,\psi\rangle\bigr]
    =\rho^2\int_0^L \varphi(x)\psi(x)\,\dd x.
\end{equation*}

\subsection{Quasi-derivatives and self-adjoint realization}

Throughout this section let $\sigma\in L^2(0,L)$ be real-valued. Define the quasi-derivative
\[
    y^{[1]}=y'-\sigma y.
\]
The regularized Sturm--Liouville expression associated with $q=\sigma'$ is
\[
    \ell_\sigma y=-(y^{[1]})'-\sigma y^{[1]}-\sigma^2 y.
\]
Formally, if all derivatives are classical, then $\ell_\sigma y=-y''+\sigma' y$. The maximal domain is
\[
    \dom T_{\max}=\{y\in L^2(0,L): y,y^{[1]}\in AC[0,L],\ \ell_\sigma y\in L^2(0,L)\}.
\]
For $y,z\in\dom T_{\max}$ one has the Lagrange identity
\[
    (\ell_\sigma y,z)_{L^2(0,L)}-(y,\ell_\sigma z)_{L^2(0,L)}
    =[y,z]_\sigma(L)-[y,z]_\sigma(0),
\]
where
\[
    [y,z]_\sigma(x)=y^{[1]}(x)\overline{z(x)}-y(x)\overline{z^{[1]}(x)}.
\]

Let $\alpha,\beta\in[0,\pi)$ and impose the separated boundary conditions
\begin{align*}
    y(0)\cos\alpha-y^{[1]}(0)\sin\alpha&=0,\\
    y(L)\cos\beta-y^{[1]}(L)\sin\beta&=0.
\end{align*}
Define $H_{\sigma,\alpha,\beta}y=\ell_\sigma y$ on the subdomain of $\dom T_{\max}$ satisfying these two boundary conditions. Then from classical results for quasi-derivative Sturm--Liouville problem, one can obtain the following result.

\begin{theorem}[Self-adjointness and compact resolvent]\label{thm:sa-compact}
Let $\sigma\in L^2(0,L)$ be real-valued. Then $H_{\sigma,\alpha,\beta}$ is self-adjoint in $L^2(0,L)$. It is bounded from below and has compact resolvent. Consequently its spectrum consists only of real eigenvalues of finite multiplicity, accumulating at $+\infty$ only. For separated boundary conditions all eigenvalues are simple.
\end{theorem}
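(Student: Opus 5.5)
The plan is to reduce the statement to the classical theory of regular Sturm--Liouville systems with integrable coefficients. The key observation is that, with the quasi-derivative, the equation $\ell_\sigma y=\lambda y$ is equivalent to the first-order system for $Y=(y,y^{[1]})^{T}$:
\[
    Y'=\begin{pmatrix}\sigma & 1\\ -\sigma^2-\lambda & -\sigma\end{pmatrix}Y .
\]
Since $\sigma\in L^2(0,L)$, all coefficients of this system lie in $L^1(0,L)$. The standard Carathéodory existence and uniqueness theory then gives unique solutions in $AC[0,L]$ for arbitrary initial data. These solutions depend entire-analytically on $\lambda$, and their Wronskian $[y,z]_\sigma$ is constant in $x$, because the trace of the coefficient matrix vanishes. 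Thus the problem fits the framework of regular quasi-differential expressions in the sense of Zettl, or Everitt--Markus, as used in \cite{SavchukShkalikov,HrynivMykytyuk2006}.

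\textbf{Self-adjointness.} The first step is to show that the minimal operator $T_{\min}$ is densely defined and symmetric, with $T_{\min}^*=T_{\max}$. Its deficiency indices are $(2,2)$, because in the regular case all solutions of $\ell_\sigma y=\pm i y$ lie in $L^2$. By the Lagrange identity, $H_{\sigma,\alpha,\beta}$ is the restriction of $T_{\max}$ obtained by imposing two separated conditions. The boundary form $[y,z]_\sigma(L)-[y,z]_\sigma(0)$ vanishes on the domain, because each condition fixes the ratio $y:y^{[1]}$ with real coefficients. These conditions form a Lagrangian subspace of the boundary space $\mathbb C^4$, so the standard Glazman--Krein--Naimark characterization yields self-adjointness.

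\textbf{Compact resolvent and boundedness below.} Pick $\lambda_0$ that is not an eigenvalue; such $\lambda_0$ exist because the characteristic function is entire and not identically zero. Construct the Green's function
\[
    G(x,t)=W^{-1}\,\phi(\min(x,t))\,\psi(\max(x,t)),
\]
where $\phi$ satisfies the left boundary condition, $\psi$ satisfies the right one, and $W$ is their constant Wronskian. The kernel is continuous on $[0,L]^2$, so the resolvent is Hilbert--Schmidt and hence compact. The spectrum is then discrete, real, with finite multiplicities, and can only accumulate at $\pm\infty$.

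To rule out accumulation at $-\infty$, I would estimate the quadratic form. On the domain,
\[
    (\ell_\sigma y,y)=\int_0^L |y^{[1]}+\sigma y|^2-2\int_0^L\sigma\,\mathrm{Re}(y'\bar y)\,\dd x+\text{boundary terms},
\]
or, more cleanly, $\int|y'|^2+\langle\sigma',|y|^2\rangle$ plus boundary terms. The term $\int\sigma(|y|^2)'$ is form-bounded with relative bound $0$. Indeed, $\|y\|_\infty^2\le\epsilon\|y'\|^2+C_\epsilon\|y\|^2$, and approximating $\sigma$ in $L^2$ by smooth functions lets the remainder be absorbed. The boundary terms are controlled by the same Sobolev inequality.

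This step is where I expect the main obstacle, since $\sigma$ is only in $L^2$ and not in $L^\infty$. The cleanest route is to split $\sigma=\sigma_1+\sigma_2$ with $\sigma_1$ smooth and $\|\sigma_2\|_{L^2}$ small. The $\sigma_1$ part is handled by integrating by parts. The $\sigma_2$ part is bounded by
\[
    2\|\sigma_2\|_2\|y\|_\infty\|y'\|_2\le \tfrac12\|y'\|^2+C\|y\|^2 .
\]
This gives a lower bound $-C\|y\|^2$ on the form.

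\textbf{Simplicity.} Suppose two eigenfunctions share the same eigenvalue. Both satisfy the left boundary condition, so their vectors $(y(0),y^{[1]}(0))$ are parallel. By uniqueness for the first-order system, the two eigenfunctions are linearly dependent, so every eigenvalue is simple.
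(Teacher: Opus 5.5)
Your proposal is correct, but it reaches compactness and semiboundedness by a different route from the paper. For self-adjointness the two arguments agree in substance. The paper cites Savchuk--Shkalikov and points to the Lagrange identity plus the regular quasi-differential theory; you spell the same mechanism out through deficiency indices $(2,2)$ and the Lagrangian-subspace (Glazman--Krein--Naimark) criterion.

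For the remaining claims, the paper sketches a form argument in the Dirichlet case only. It represents the operator by a closed semibounded form on $H_0^1(0,L)$ via the first representation theorem, and gets compactness from $H^1(0,L)\hookrightarrow L^2(0,L)$. General $(\alpha,\beta)$ are left to ``analogous estimates''.

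You get compactness from the explicit Green kernel $W^{-1}\phi(\min(x,t))\psi(\max(x,t))$. This kernel is continuous, hence Hilbert--Schmidt, for every pair $(\alpha,\beta)$ at once. You get the lower bound by estimating directly on the operator domain
$(\ell_\sigma y,y)=\int_0^L|y'|^2\,\dd x-2\operatorname{Re}\int_0^L\sigma y'\bar y\,\dd x+\cot\alpha\,|y(0)|^2-\cot\beta\,|y(L)|^2$,
where a boundary term is absent when the corresponding sine vanishes.

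What your route buys:
\begin{itemize}
\item It is more self-contained and uniform in the boundary conditions.
\item It never has to identify the form-generated operator with the restriction $H_{\sigma,\alpha,\beta}$ of $T_{\max}$.
\item It works with the correct positive part $\int|y'|^2$. The paper's sketch writes the form with $\int|y^{[1]}|^2$ but then measures $\int\sigma^2|y|^2$ against $\|y'\|^2$, leaving that comparison implicit.
\end{itemize}
What the paper's route buys in return is the identification of the form domain as $H_0^1(0,L)$. The min--max argument in Lemma~\ref{lem:rough-moment} relies on this, since the free sine test functions lie in the form domain rather than the operator domain.

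Two simplifications to your write-up. First, the splitting $\sigma=\sigma_1+\sigma_2$ is unnecessary. The bound $|2\operatorname{Re}\int_0^L\sigma y'\bar y\,\dd x|\le 2\|\sigma\|_{L^2}\|y\|_\infty\|y'\|_{L^2}$, combined with $\|y\|_\infty^2\le\epsilon\|y'\|^2+C_\epsilon\|y\|^2$, already gives relative bound zero for every $\sigma\in L^2(0,L)$. The sup norm falls on $y$, not on $\sigma$, so there is no obstacle here. You should also drop the variant with $\langle\sigma',|y|^2\rangle$: its boundary terms involve point values of $\sigma$, which are undefined for $\sigma\in L^2$.

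Second, for the Green function you need not appeal to the characteristic function being not identically zero. Once self-adjointness is established, you may simply take $\lambda_0$ non-real.
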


For a detailed proof, we refer the reader to \cite{SavchukShkalikov}. We recall the main mechanism, since it is used to pass to the white-noise case. The Lagrange identity shows that the imposed real separated boundary conditions make the boundary form vanish, hence the corresponding restriction is symmetric. The regular quasi-derivative Sturm--Liouville theory then shows that these separated boundary conditions give a self-adjoint restriction of the maximal operator.

For completeness, let us also indicate why lower semiboundedness and compactness of the resolvent are natural in this setting. In the Dirichlet case the associated closed form is represented by
\[
    \mathfrak t[y]=\int_0^L |y^{[1]}|^2\,\dd x-\int_0^L \sigma^2|y|^2\,\dd x,
\]
with form domain $H_0^1(0,L)$. In one dimension, by the Gagliardo--Nirenberg interpolation inequality,
\[
    \|y\|_\infty^2\le C_L\|y\|_{L^2(0,L)}\|y'\|_{L^2(0,L)}.
\]
Thus, for every $\eta>0$,
\[
    \int_0^L \sigma^2|y|^2\,\dd x
    \le \|\sigma\|_{L^2(0,L)}^2\|y\|_\infty^2
    \le \eta\|y'\|_{L^2(0,L)}^2+C_{\eta,L,\sigma}\|y\|_{L^2(0,L)}^2.
\]
This shows that the negative part of the form is infinitesimally form-bounded with respect to the Dirichlet energy. Consequently the form is closed and bounded from below, and the first representation theorem for closed semibounded forms (equivalently, the Friedrichs construction in this case) produces a lower semibounded self-adjoint operator. After adding a sufficiently large multiple of the $L^2(0,L)$-norm, the form norm is equivalent to the $H^1(0,L)$-norm. Since the embedding $H^1(0,L)\hookrightarrow L^2(0,L)$ is compact, the associated resolvent is compact. The spectral theorem applied to the compact self-adjoint resolvent then gives a purely discrete real spectrum with finite multiplicities, accumulating only at $+\infty$. Analogous estimates, with the usual finite-dimensional boundary corrections, apply to general separated boundary conditions.

We now apply Theorem~\ref{thm:sa-compact} to the case relevant for this paper. For every Brownian path outside a null set, $B(\omega)\in C[0,L]$, and hence $\sigma_\omega=\rho B(\omega)\in L^2(0,L)$. Therefore Theorem~\ref{thm:sa-compact} gives the following pathwise realization of the white-noise operator.
\begin{corollary}
For almost every $\omega$, define the quasi-derivative
\begin{equation*}
    y^{[1]}:=y'-\sigma_\omega y
\end{equation*}
and the regularized differential expression
\begin{equation*}
    \ell_{\sigma_\omega}y
    :=-\bigl(y'-\sigma_\omega y\bigr)'-\sigma_\omega y'
    =-\bigl(y^{[1]}\bigr)'-\sigma_\omega y^{[1]}-\sigma_\omega^2y .
\end{equation*}
Let $\alpha,\beta\in[0,\pi)$, and define $H_{\omega,\alpha,\beta}$ in $L^2(0,L)$ by
\begin{equation*}
    H_{\omega,\alpha,\beta}y=\ell_{\sigma_\omega}y
\end{equation*}
on the domain
\begin{align*}
\operatorname{dom} H_{\omega,\alpha,\beta}:=\bigl\{y\in L^2(0,L):&\ y,y^{[1]}\in \mathrm{AC}[0,L],\ \ell_{\sigma_\omega}y\in L^2(0,L),\nonumber\\
&\ y(0)\cos\alpha-y^{[1]}(0)\sin\alpha=0,\nonumber\\
&\ y(L)\cos\beta-y^{[1]}(L)\sin\beta=0\bigr\}.
\end{align*}
Then $H_{\omega,\alpha,\beta}$ is a self-adjoint operator in $L^2(0,L)$. Moreover, it is bounded from below and has compact resolvent. Consequently its spectrum is real, purely discrete, and may be listed as
\begin{equation*}
    \lambda_1(\omega)\le \lambda_2(\omega)\le\cdots,
    \qquad \lambda_n(\omega)\to +\infty,
\end{equation*}
with multiplicities counted. 
\end{corollary}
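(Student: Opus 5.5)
The corollary follows from Theorem~\ref{thm:sa-compact}, applied pathwise with $\sigma=\sigma_\omega$. The only preliminary work is to isolate a single null set off which the deterministic hypotheses hold.

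First fix the exceptional set. Let $N\subset\Omega$ be the null set on which $x\mapsto B_x(\omega)$ fails to be continuous on $[0,L]$; by the construction of Brownian motion this set can be taken to be $\Prob$-null, or even empty for a continuous version. For $\omega\notin N$, the function $\sigma_\omega=\rho B(\omega)$ is real-valued and continuous on the compact interval $[0,L]$. It is therefore bounded, and so $\sigma_\omega\in L^2(0,L)$ with $\|\sigma_\omega\|_{L^2}^2\le L\,\rho^2\sup_{[0,L]}|B(\omega)|^2<\infty$.

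Next check that the objects in the corollary coincide with those in the general setup. Since $y^{[1]}=y'-\sigma_\omega y$, we have $y'=y^{[1]}+\sigma_\omega y$. Hence
\[
-\bigl(y'-\sigma_\omega y\bigr)'-\sigma_\omega y'=-(y^{[1]})'-\sigma_\omega y^{[1]}-\sigma_\omega^2 y,
\]
which is exactly $\ell_{\sigma_\omega}y$. The stated domain is then precisely the subdomain of $\dom T_{\max}$ cut out by the two separated boundary conditions. It follows that $H_{\omega,\alpha,\beta}=H_{\sigma_\omega,\alpha,\beta}$ as operators.

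Now apply Theorem~\ref{thm:sa-compact} with $\sigma=\sigma_\omega$. This gives self-adjointness, lower semiboundedness, and compactness of the resolvent. It also gives that the spectrum is real and discrete, consisting of eigenvalues of finite multiplicity that accumulate only at $+\infty$. A self-adjoint operator that is bounded below and has compact resolvent has infinite-dimensional $L^2(0,L)$ as its ambient space, so its spectrum must be infinite. The eigenvalues can therefore be enumerated increasingly with multiplicity as $\lambda_1(\omega)\le\lambda_2(\omega)\le\cdots$, and then $\lambda_n(\omega)\to+\infty$.

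No step is a real obstacle, since all the analytic content lies in Theorem~\ref{thm:sa-compact}. The one point needing care is that the null set $N$ is chosen independently of $\alpha$ and $\beta$. Measurability of $\omega\mapsto\lambda_n(\omega)$ is not claimed in the corollary and can be deferred; if needed, it would follow from the min-max characterization together with continuous dependence of the form on $\sigma$ in $L^2$.
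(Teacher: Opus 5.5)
Your proposal is correct and follows the paper's argument: the paper likewise observes that off a null set $B(\omega)\in C[0,L]$, so $\sigma_\omega=\rho B(\omega)\in L^2(0,L)$, and then applies Theorem~\ref{thm:sa-compact} pathwise. Your additional checks are details the paper leaves implicit. These are the identity for $\ell_{\sigma_\omega}$, the matching of the domain with the boundary-restricted maximal domain, the independence of the null set from $\alpha,\beta$, and infinite-dimensionality forcing $\lambda_n\to+\infty$.
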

\begin{remark}
	Under the usual real separated boundary conditions, the one-dimensional Sturm--Liouville oscillation theory gives simplicity of the eigenvalues.
\end{remark}

Moreover, the compact resolvent directly implies the completeness of the eigenfunctions. This is a standard consequence of the spectral theorem for compact self-adjoint operators.

\begin{theorem}[Completeness of eigenfunctions]
Almost surely, the eigenfunctions of $H_{\omega,\alpha,\beta}$ form an orthonormal basis of $L^2(0,L)$. More precisely, for almost every $\omega$, there exist real eigenvalues
\[
    \lambda_1(\omega)\le\lambda_2(\omega)\le\cdots,
    \qquad \lambda_n(\omega)\to+\infty,
\]
counted with multiplicity, and an orthonormal family of eigenfunctions $(\varphi_n(\omega))_{n\ge1}$ such that, for every $f\in L^2(0,L)$,
\[
    f=\sum_{n=1}^\infty \langle f,\varphi_n(\omega)\rangle\varphi_n(\omega)~~a.s.
\]
in $L^2(0,L)$ and
\[
    \|f\|_{L^2(0,L)}^2=\sum_{n=1}^\infty |\langle f,\varphi_n(\omega)\rangle|^2.~~a.s.
\]
\end{theorem}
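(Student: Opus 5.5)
The plan is to apply the spectral theorem for compact self-adjoint operators to the resolvent, fixing one Brownian path at a time, and then to check that the exceptional set of paths is a null set. Let $\Omega_0\subset\Omega$ be the full-measure event on which $B(\omega)\in C[0,L]$. Fix $\omega\in\Omega_0$. By the preceding corollary (that is, Theorem~\ref{thm:sa-compact} applied with $\sigma=\sigma_\omega$), $H:=H_{\omega,\alpha,\beta}$ is self-adjoint, bounded from below, and has compact resolvent.

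Choose a real $\mu$ below the lower bound of $H$. Then $R:=(H-\mu)^{-1}$ is a bounded, compact, self-adjoint operator on $L^2(0,L)$, and it is injective. For self-adjointness, note that $R$ is the inverse of the self-adjoint operator $H-\mu$, and a real-point resolvent of a self-adjoint operator is self-adjoint. It is also positive. The Hilbert--Schmidt spectral theorem for compact self-adjoint operators then applies. It gives an orthonormal family of eigenvectors $(\varphi_n)$ for the nonzero eigenvalues $\nu_n$ of $R$, with $\nu_n\downarrow0$ and multiplicities counted. It also gives the decomposition $L^2(0,L)=\ker R\oplus\overline{\operatorname{span}}\{\varphi_n\}$.

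Since $\ker R=\{0\}$, the family $(\varphi_n)$ is an orthonormal basis. Each $\varphi_n$ lies in $\operatorname{ran}R=\dom H$ and satisfies $H\varphi_n=\lambda_n\varphi_n$ with $\lambda_n=\mu+\nu_n^{-1}$. Because $\nu_n$ is nonincreasing and tends to $0$, the $\lambda_n$ are nondecreasing and tend to $+\infty$. Conversely, every eigenvalue of $H$ arises in this way: if $Hy=\lambda y$, then $Ry=(\lambda-\mu)^{-1}y$. Hence the $\lambda_n$ exhaust the spectrum listed in the corollary, with multiplicities. The basis property then gives the expansion $f=\sum_n\langle f,\varphi_n\rangle\varphi_n$ in $L^2(0,L)$, and Parseval's identity follows from Bessel's inequality together with completeness.

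Finally, all of these identities hold for every $\omega\in\Omega_0$, and $\Prob(\Omega_0)=1$, so they hold almost surely. I expect no real obstacle. The only delicate point is that $\ker R=\{0\}$, which is automatic because $R$ is a resolvent. If measurability of $\omega\mapsto\varphi_n(\omega)$ were required, I would instead take the eigenfunctions to be normalized Dirichlet solutions $u_{k_n}$; these are measurable via the Volterra equation. The statement, however, only asks for pathwise existence.
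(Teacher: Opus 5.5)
Your proposal is correct and takes the same approach as the paper, which only remarks that, for each fixed path, completeness follows from the spectral theorem applied to the compact self-adjoint resolvent. Your argument supplies the details the paper leaves implicit: the choice of a real $\mu$ below the spectrum, injectivity of $(H-\mu)^{-1}$, the correspondence $\lambda_n=\mu+\nu_n^{-1}$ with matching eigenspaces, and the restriction to the full-measure set of continuous Brownian paths.
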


\section{Asymptotic results for the eigenvalues}
In this section, we derive the stochastic Pr\"ufer equations and prove the high-energy expansion of the Pr\"ufer angle, which is useful for studying asymptotic results of eigenvalues under separated boundary value conditions. As a typical case, we give the Dirichlet eigenvalue asymptotics with an explicit stochastic correction and the almost-sure estimate.

\subsection{Stochastic equation and Pr\"ufer variables}
Write $\lambda=k^2>0$. The eigenvalue equation $\ell_{\sigma_\omega}y=k^2y$ is equivalent to the first-order system
\[
    \begin{cases}
    Y'=Q+\sigma Y,\\[2mm]
    Q'=-(k^2+\sigma^2)Y-\sigma Q,
    \end{cases}
\]
where \( 
Y=y,
\quad
Q=y^{[1]}.\)
Equivalently, if one introduces the classical momentum variable
\[
    P:=Y'=Q+\sigma Y,
\]
then the same equation becomes the SDE system
\begin{equation}\label{SDE}
\left\{
    \begin{aligned}
    &\dd Y_x=P_x\,\dd x,\\[2mm]
    &\dd P_x=-k^2Y_x\,\dd x+\rho Y_x\,\dd B_x.
    \end{aligned}\right.
\end{equation}
Actually, since $Q=P-\sigma Y$ and $\dd \sigma=\rho\,\dd B$, we have
\[
    \dd Q=\dd P-\dd (\sigma Y)
       =\dd P-\sigma\,\dd Y-Y\,\dd \sigma
\]
by It\^o's formula. Therefore 
\[
    Q'=-k^2Y-\sigma P
\]
is equivalent to
\[
    \dd P=-k^2Y\,\dd x+\rho Y\,\dd B_x.
\]
Any solution of the SDE \eqref{SDE} gives a solution of the quasi-derivative equation by setting $Q=P-\sigma Y$, therefore, \(k^2\) is an eigenvalue if and only if the solution of \eqref{SDE} satisfies the corresponding boundary conditions. This equivalence is understood as follows. For each fixed Brownian path outside a null set, the quasi-derivative problem is a deterministic Sturm--Liouville problem with primitive $\sigma=\rho B$. The classical momentum $P=Y'$ is then a semimartingale in the probabilistic representation, and the identity $Q=P-\sigma Y$ converts the pathwise quasi-derivative system into the It\^o system \eqref{SDE}. Conversely, a solution of \eqref{SDE} with $Q=P-\sigma Y$ satisfies the quasi-derivative equation in the distributional sense and preserves the boundary conditions. We shall use this stochastic first-order system to investigate the eigenvalues.

For a non-trivial solution, define the lifted Pr\"ufer variables by
\[
    Y_x=R_x\sin\theta_x,
    \qquad
    \frac{P_x}{k}=R_x\cos\theta_x,
    \qquad R_x>0,
\]
where $\theta$ is chosen continuously. The use of a lifted angle avoids the branch ambiguity of the arctangent.

\begin{lemma}[Pr\"ufer angle and amplitude equations]\label{lem:prufer}
The lifted angle satisfies
\begin{equation}\label{eq:prufer-angle}
    \dd\theta_x
    =k\,\dd x-\frac{\rho}{k}\sin^2\theta_x\,\dd B_x
    +\frac{\rho^2}{k^2}\sin^3\theta_x\cos\theta_x\,\dd x.
\end{equation}
Moreover,
\begin{equation}\label{eq:logR}
    \dd\log R_x
    =\frac{\rho}{2k}\sin(2\theta_x)\,\dd B_x
    -\frac{\rho^2}{2k^2}\sin^2\theta_x\cos(2\theta_x)\,\dd x.
\end{equation}
\end{lemma}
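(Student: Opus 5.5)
The plan is to pass to a single complex semimartingale and apply It\^o's formula to its logarithm, so that the angle and amplitude equations come out together. Put
\[
    W_x:=\frac{P_x}{k}+iY_x=R_x e^{i\theta_x}.
\]
Using \eqref{SDE}, I first compute $\dd W$:
\[
    \dd W_x=\frac{1}{k}\bigl(-k^2Y_x\,\dd x+\rho Y_x\,\dd B_x\bigr)+iP_x\,\dd x
    =ik\,W_x\,\dd x+\frac{\rho}{k}\,R_x\sin\theta_x\,\dd B_x .
\]
Only the Brownian part contributes to the quadratic variation. Since $(\dd W)^2=\frac{\rho^2}{k^2}R^2\sin^2\theta\,\dd x$ and $\dd B\,\dd x=0$, we get
\[
    \dd\langle W\rangle_x=\frac{\rho^2}{k^2}R_x^2\sin^2\theta_x\,\dd x .
\]

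Before applying It\^o's formula I must check that $W$ never vanishes. For a non-trivial solution, suppose $W_{x_0}=0$, i.e. $Y_{x_0}=P_{x_0}=0$. Pathwise uniqueness for the linear SDE \eqref{SDE} would then force $Y\equiv P\equiv0$. Equivalently, one can argue on the pathwise quasi-derivative system, which is a linear ODE with $L^2$ coefficients and $(Y,Q)=(0,0)$ at $x_0$. Hence $W$ takes values in $\mathbb C\setminus\{0\}$. The lifted logarithm $\log W=\log R+i\theta$ is then well defined along the continuous path, and it is locally a holomorphic branch of $\log$.

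Next I apply It\^o's formula for a holomorphic function of a complex semimartingale. Since $\partial_{\bar z}$ terms vanish, this gives
\[
    \dd\log W=\frac{\dd W}{W}-\frac{1}{2}\frac{\dd\langle W\rangle}{W^2}.
\]
To justify this rigorously with the lift, I would work on random intervals where $\theta$ stays inside a sector on which a fixed branch of $\log$ is smooth, and patch these intervals together. Alternatively, one can apply the real It\^o formula separately to $\tfrac12\log(Y^2+P^2/k^2)$ and to local arctangent charts. Substituting the computations above, and using $\sin\theta/e^{i\theta}=\sin\theta\,e^{-i\theta}$, gives
\[
    \dd(\log R+i\theta)=ik\,\dd x+\frac{\rho}{k}\sin\theta\,e^{-i\theta}\,\dd B_x
    -\frac{\rho^2}{2k^2}\sin^2\theta\,e^{-2i\theta}\,\dd x .
\]

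The last step is to separate real and imaginary parts.
\begin{itemize}
\item Imaginary part. We have $\operatorname{Im}(\sin\theta\,e^{-i\theta})=-\sin^2\theta$ and $\operatorname{Im}(-\tfrac12\sin^2\theta\,e^{-2i\theta})=\tfrac12\sin^2\theta\sin2\theta=\sin^3\theta\cos\theta$. This yields exactly \eqref{eq:prufer-angle}.
\item Real part. We have $\operatorname{Re}(\sin\theta\,e^{-i\theta})=\sin\theta\cos\theta=\tfrac12\sin2\theta$ and $\operatorname{Re}(-\tfrac12\sin^2\theta\,e^{-2i\theta})=-\tfrac12\sin^2\theta\cos2\theta$. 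This yields \eqref{eq:logR}.
\end{itemize}

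The only delicate point is the justification of It\^o's formula for the lifted (multivalued) logarithm together with the non-vanishing of $W$. The localization argument described above handles it, and the remainder is routine algebra.
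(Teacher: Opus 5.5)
Your proof is correct. The drift $\dd W=ikW\,\dd x+\frac{\rho}{k}Y\,\dd B_x$ and the bracket $\dd\langle W,W\rangle=\frac{\rho^2}{k^2}Y^2\,\dd x$ are right. The real and imaginary parts of $\dd W/W-\tfrac12\,\dd\langle W,W\rangle/W^2$ give \eqref{eq:logR} and \eqref{eq:prufer-angle} exactly.

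Your computation differs from the paper's, though the two are closely related. The paper applies the real It\^o formula twice:
\begin{itemize}
\item to the chart function $\arctan(kY/P)$, computing $\Phi_Y,\Phi_P,\Phi_{PP}$;
\item separately to $\log R$ through $R^2=Y^2+(P/k)^2$;
\end{itemize}
and only then substitutes polar coordinates. You apply the holomorphic It\^o formula once, to $\log W=\log R+i\theta$. Both equations then come out of a single identity, and the second-order correction is the single term $-\tfrac12\,\dd\langle W,W\rangle/W^2$ rather than a Hessian calculation. The paper's version needs only real It\^o calculus. Yours is shorter and shows why the angle and amplitude drifts are the imaginary and real parts of the same expression $-\frac{\rho^2}{2k^2}\sin^2\theta\,e^{-2i\theta}$. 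It also makes the local charts natural: they are just branches of $\log$ on sectors.

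You also justify $W\neq0$, which the paper simply assumes when it writes $R_x>0$. There is one caveat in that step. SDE pathwise uniqueness started from the first zero $\tau$ (a stopping time) only yields $W\equiv0$ on $[\tau,L]$. That does not by itself contradict non-triviality, because you would need uniqueness backward in $x$. Your alternative argument supplies exactly this. The pathwise system for $(Y,Q)$ is a linear ODE with integrable coefficients, so its solutions are unique through any point in both directions. Equivalently, for two solutions the Wronskian $Y^{(1)}P^{(2)}-Y^{(2)}P^{(1)}$ is constant by It\^o's product rule: the $\dd B$ terms cancel and $Y$ has no martingale part. Hence the fundamental matrix is invertible for every $x$. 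You should rely on that argument rather than on SDE uniqueness.
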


\begin{proof}
Apply It\^o's formula to $\Phi(Y,P)=\arctan(kY/P)$ locally on coordinate charts, and then patch the result for the lifted angle. The derivatives are
\[
    \Phi_Y=\frac{kP}{k^2Y^2+P^2},\qquad
    \Phi_P=-\frac{kY}{k^2Y^2+P^2},\qquad
    \Phi_{PP}=\frac{2kYP}{(k^2Y^2+P^2)^2}.
\]
Since the only quadratic variation comes from $\dd P$, It\^o's formula gives
\[
    \dd\theta
    =k\,\dd x-\frac{\rho kY^2}{k^2Y^2+P^2}\,\dd B
    +\frac{\rho^2kY^3P}{(k^2Y^2+P^2)^2}\,\dd x.
\]
Substituting $Y=R\sin\theta$ and $P=kR\cos\theta$ yields \eqref{eq:prufer-angle}.

For the amplitude, write $R^2=Y^2+(P/k)^2$. A direct It\^o calculation gives
\[
    \dd\log R
    =\frac{\rho YP}{k^2R^2}\,\dd B
    +\frac{\rho^2}{2k^2}\left(\frac{Y^2}{R^2}-\frac{2Y^2P^2}{k^2R^4}\right)\dd x.
\]
Again substituting $Y=R\sin\theta$, $P=kR\cos\theta$ gives \eqref{eq:logR}.
\end{proof}

\subsection{High-energy expansion of the Pr\"ufer angle}

Now at an endpoint $a\in\{0,L\}$ with separated parameter $\gamma\in[0,\pi)$, the quasi-derivative boundary condition
\[
    \cos\gamma\,Y(a)-\sin\gamma\,Q(a)=0
\]
becomes, in the $(Y,P)$-Pr\"ufer angle,
\[
    \bigl(\cos\gamma+\sigma(a)\sin\gamma\bigr)\sin\theta(a)
    -k\sin\gamma\cos\theta(a)=0.
\]
Thus, for general separated boundary conditions, the eigenvalue condition is obtained as follows. Choose an initial angle $\vartheta_{\alpha,0}(k)$ satisfying
\[
    \bigl(\cos\alpha+\sigma(0)\sin\alpha\bigr)\sin\vartheta_{\alpha,0}(k)
    -k\sin\alpha\cos\vartheta_{\alpha,0}(k)=0,
\]
solve (2) with $\theta_0=\vartheta_{\alpha,0}(k)$, and impose at $x=L$
\[
    \bigl(\cos\beta+\sigma(L)\sin\beta\bigr)\sin\theta_L(k)
    -k\sin\beta\cos\theta_L(k)=0.
\]
Equivalently, if $\vartheta_{\beta,L}(k)$ is a terminal boundary phase satisfying
\[
    \bigl(\cos\beta+\sigma(L)\sin\beta\bigr)\sin\vartheta_{\beta,L}(k)
    -k\sin\beta\cos\vartheta_{\beta,L}(k)=0,
\]
then the eigenvalue condition is
\[
    \theta_L(k)-\vartheta_{\beta,L}(k)\in\pi\mathbb Z.
\]
When $\sin\gamma=0$, the corresponding boundary phase is $0$ modulo $\pi$. For convenience,  in what follows, we specialize to the Dirichlet case. 

In the Dirichlet case we choose the solution with initial data $Y_0=0$ and $P_0=k$, so that $R_0=1$ and $\theta_0=0$. If the $n$-th Dirichlet eigenvalue $\lambda_n$ is positive and $k_n=\sqrt{\lambda_n}$, then the Dirichlet eigenvalue condition is
\[
    \theta_L(k_n)=n\pi .
\]
The convention $k_n=\sqrt{\lambda_n^+}$ will be used below in the $L^p$ statements; Lemma~\ref{lem:rough-moment} shows that this convention is harmless in the high-energy limit.

Set
\[
    h(u)=\sin^2u,
    \qquad h'(u)=\sin(2u),
    \qquad g(u)=\sin^3u\cos u.
\]
For $k>0$ define
\begin{align*}
    M_x(k)&=\int_0^x h(ks)\,\dd B_s,\\
    J_x(k)&=\int_0^x h'(ks)M_s(k)\,\dd B_s,\\
    A_x(k)&=\int_0^x g(ks)\,\dd s.
\end{align*}
The first two integrals are It\^o integrals for deterministic $k$, while $A_x(k)$ is a Lebesgue integral.

\begin{proposition}[Uniform second-order expansion]\label{prop:angle-expansion}
For every $p\in[2,\infty)$ there exists $C_p<\infty$, depending only on $p,L,$ and $\rho$, such that as $k\to\infty$,
\begin{equation}\label{eq:theta-expansion}
    \theta_x(k)
    =kx-\frac{\rho}{k}M_x(k)
    +\frac{\rho^2}{k^2}\bigl(J_x(k)+A_x(k)\bigr)
    +R_x(k),
\end{equation}
with
\[
    \left\|\sup_{0\le x\le L}|R_x(k)|\right\|_{L^p(\Omega)}\le C_pk^{-3}.
\]
Furthermore,
\[
    \left\|\sup_{x\le L}|M_x(k)|\right\|_{L^p(\Omega)}
    +\left\|\sup_{x\le L}|J_x(k)|\right\|_{L^p(\Omega)}
    \le C_p,
\]
and $\sup_{x\le L}|A_x(k)|\le (4k)^{-1}$. In particular, $\theta_x(k)=kx-\frac{\rho}{k}M_x(k)+O_{L^p}(k^{-2})$.
\end{proposition}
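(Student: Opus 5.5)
We propose to write the angle equation \eqref{eq:prufer-angle} in integrated form and to expand around the free phase $kx$. Set $\delta_x:=\theta_x-kx$, so that $\delta_0=0$ and
\[
    \delta_x=-\frac{\rho}{k}\int_0^x h(\theta_s)\,\dd B_s+\frac{\rho^2}{k^2}\int_0^x g(\theta_s)\,\dd s .
\]
The plan has three layers of increasing precision.

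\emph{First layer (a priori bound).} We first show $\|\sup_{x\le L}|\delta_x|\|_{L^p}\le C_pk^{-1}$. Since $h,g$ are bounded, BDG bounds the stochastic integral by $C_p(\rho/k)L^{1/2}$, and the drift is trivially $O(k^{-2})$. The same BDG bound with integrand $h(ks)$ gives $\|\sup_x|M_x(k)|\|_{L^p}\le C_p$. For $J$, BDG followed by H\"older gives $\|\sup_x|J_x|\|_{L^p}\le C_p\|(\int_0^L M_s^2\,\dd s)^{1/2}\|_{L^p}\le C_p'$. The bound on $A$ comes from writing $g(u)=\tfrac14\sin 2u-\tfrac18\sin 4u$ and integrating explicitly: $|A_x|\le \frac1k(\tfrac14+\tfrac1{16})$. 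To reach the stated constant $(4k)^{-1}$ one either uses the sharper primitive $\sin^4u/4$ of $g$, which gives $A_x=\sin^4(kx)/(4k)$, or accepts that primitive directly; the explicit primitive is the cleanest route.

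\emph{Second layer (first-order expansion).} Taylor-expand $h(\theta_s)=h(ks)+h'(ks)\delta_s+r_s$ with $|r_s|\le C\delta_s^2$ (using $\|h''\|_\infty\le 2$), and $g(\theta_s)=g(ks)+O(|\delta_s|)$. This yields
\[
    \delta_x=-\frac{\rho}{k}M_x-\frac{\rho}{k}\int_0^x h'(ks)\delta_s\,\dd B_s-\frac{\rho}{k}\int_0^x r_s\,\dd B_s+\frac{\rho^2}{k^2}A_x+O(k^{-2}\sup|\delta|).
\]
By BDG and the first layer, the $r$-term is $O_{L^p}(k^{-3})$ (using the $L^{2p}$ bound on $\delta$), and the last term is $O_{L^p}(k^{-3})$. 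The middle term is $O_{L^p}(k^{-2})$. Hence $\delta_x=-\frac{\rho}{k}M_x+E_x$ with $\|\sup|E|\|_{L^p}\le C_pk^{-2}$, which already gives the ``in particular'' statement.

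\emph{Third layer (second order).} Substitute $\delta_s=-\frac{\rho}{k}M_s+E_s$ into the middle term:
\[
    -\frac{\rho}{k}\int_0^x h'(ks)\delta_s\,\dd B_s=\frac{\rho^2}{k^2}J_x-\frac{\rho}{k}\int_0^x h'(ks)E_s\,\dd B_s .
\]
By BDG and the second layer, the last piece is $O_{L^p}(k^{-3})$ uniformly in $x$. The remaining drift error, $\frac{\rho^2}{k^2}\int_0^x\bigl(g(\theta_s)-g(ks)\bigr)\dd s$, is bounded by $C k^{-2}\sup|\delta|=O_{L^p}(k^{-3})$. Collecting terms gives \eqref{eq:theta-expansion} with $\|\sup_x|R_x|\|_{L^p}\le C_pk^{-3}$. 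All constants depend only on $p$, $L$, $\rho$, and on norms of $h,g$ and their derivatives.

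The main obstacle is bookkeeping rather than analysis. Every BDG step must be applied to adapted integrands, which holds because $k$ is deterministic and $\theta$ is adapted. The suprema over $x$ must be controlled inside the $L^p$ norm, which BDG gives directly for the stochastic integrals and which is trivial for the drifts. Moments must be taken at a doubled exponent $2p$ wherever squares such as $\delta^2$ appear; since the claimed bound holds for every finite $p$, this raises no issue. Finally, the lifted angle requires no extra care: \eqref{eq:prufer-angle} holds globally, since $R>0$ for a nontrivial solution.
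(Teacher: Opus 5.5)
Your proposal is correct and takes essentially the same route as the paper: a BDG a priori bound $\|\sup_x|\delta_x|\|_{L^p}\le C_pk^{-1}$, then a first-order Taylor expansion of $h$ around $ks$ with the quadratic remainder controlled through the $L^{2p}$ bound, then substitution of $\delta_s=-\frac{\rho}{k}M_s+O_{L^p}(k^{-2})$ into the linear term to produce $J_x$, with $A_x=\sin^4(kx)/(4k)$ computed explicitly. The differences are cosmetic. The paper obtains the first-order expansion from the Lipschitz property of $h$ before it Taylor-expands, while you read it off the Taylor expansion directly. You also justify the $J$-bound through the $M$-bound explicitly, which the paper leaves implicit.
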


\begin{proof}
Let $\delta_x=\theta_x-kx$. The integrated form of \eqref{eq:prufer-angle} is
\begin{equation}\label{eq:delta-int}
    \delta_x=-\frac{\rho}{k}\int_0^xh(\theta_s)\,\dd B_s
    +\frac{\rho^2}{k^2}\int_0^xg(\theta_s)\,\dd s.
\end{equation}
Note that $\theta$ is continuous and adapted; $h$ is continuous, thus $h(\theta_s)$ is predictable.
Hence
\[
\int_0^x h(\theta_s)\,\dd B_s
\]
is a continuous square-integrable martingale with quadratic variation
\[
    \int_0^x |h(\theta_s)|^2\,\dd s.
\]. 

Recall the Burkholder--Davis--Gundy inequality. If $M_t=\int_0^t H_s\,\dd B_s$ is a continuous martingale, then
\[
  \left\|
\sup_{0\le t\le T}|M_t|
\right\|_{L^p(\Omega)}
\le
C_p
\left\|
\langle M\rangle_T^{1/2}
\right\|_{L^p(\Omega)},
\]
where $1 \le p < +\infty$ and $\langle M\rangle_T =\int _0^T H_s^2 ds$ denotes the predictable quadratic variation.
Since $h$ and $g$ are bounded, the Burkholder--Davis--Gundy (BDG) inequality gives
\[
    \left\|\sup_{x\le L}|\delta_x|\right\|_{L^p(\Omega)}\le C_pk^{-1}.
\]
Moreover, since $h$ is Lipschitz, another application of BDG gives
\[
\begin{aligned}
&\left\|\sup_{x\le L}
\left|\int_0^x\bigl(h(\theta_s)-h(ks)\bigr)\,\dd B_s\right|
\right\|_{L^p(\Omega)}  \\
&\qquad\le C_p
\left\|
\left(\int_0^L |\delta_s|^2\,\dd s\right)^{1/2}
\right\|_{L^p(\Omega)}
\le C_p\left\|\sup_{s\le L}|\delta_s|\right\|_{L^p(\Omega)}
\le C_pk^{-1}.
\end{aligned}
\]
Consequently,
\[
    \delta_x+\frac{\rho}{k}M_x(k)=O_{L^p}(k^{-2})
\]
uniformly in $x\in[0,L]$.

Moreover, Taylor's formula gives
\[
    h(\theta_s)=h(ks)+h'(ks)\delta_s+r_s,
    \qquad |r_s|\le C|\delta_s|^2.
\]
By the BDG inequality and the preceding bound, applied in the higher moment $2p$,
\[
\begin{aligned}
    \left\|\sup_{x\le L}\left|\int_0^xr_s\,\dd B_s\right|\right\|_{L^p(\Omega)}
    &\le C_p\left\|\left(\int_0^L|\delta_s|^4\,\dd s\right)^{1/2}\right\|_{L^p(\Omega)} \\
    &\le C_{p,L}\left\|\sup_{s\le L}|\delta_s|^2\right\|_{L^p(\Omega)}
     = C_{p,L}\left\|\sup_{s\le L}|\delta_s|\right\|_{L^{2p}(\Omega)}^2
    \le C_pk^{-2}.
\end{aligned}
\]
Consequently,
\[
    \int_0^xh'(ks)\delta_s\,\dd B_s
    =-\frac{\rho}{k}\int_0^xh'(ks)M_s(k)\,\dd B_s+O_{L^p}(k^{-2})
\]
uniformly in $x$. Thus
\[
    \int_0^xh(\theta_s)\,\dd B_s
    =M_x(k)-\frac{\rho}{k}J_x(k)+O_{L^p}(k^{-2}).
\]
Similarly, since $g$ is Lipschitz and $\delta=O_{L^p}(k^{-1})$ uniformly,
\[
    \int_0^xg(\theta_s)\,\dd s
    =A_x(k)+O_{L^p}(k^{-1}).
\]
Considering the above estimates, the equation \eqref{eq:delta-int} gives \eqref{eq:theta-expansion} with the claimed $O_{L^p}(k^{-3})$ remainder.

The martingale bounds for $M$ and $J$ follow from the BDG and boundedness of $h,h'$.
Finally,
\[
    A_x(k)=\int_0^x\sin^3(ks)\cos(ks)\,\dd s
    =\frac{\sin^4(kx)}{4k},
\]
so $\sup_x|A_x(k)|\le(4k)^{-1}$. 

Hence the proposition is proved.
\end{proof}

\begin{remark}[Range of $p$]\label{rem:p-range}
We state Proposition \ref{prop:angle-expansion} for $p\in[2,\infty)$ because this range is convenient when estimating quadratic Taylor remainders such as $|\theta_s-ks|^2$ by elementary supremum estimates and H\"older's inequality. The restriction is not essential. The BDG inequality is valid for every finite $p\ge1$, and the estimates for $p\in[1,2)$ follow from the monotonicity of $L^p$-norms on a probability space. The endpoint $p=\infty$ is excluded in the present moment estimates.
\end{remark}

\begin{remark}
The statement is an $L^p$ and in-probability expansion for deterministic values of $k$. Almost-sure estimates along the discrete eigenvalue sequence are obtained later by combining finite-moment bounds with the Borel--Cantelli lemma.
\end{remark}

\subsection{Dirichlet eigenvalue asymptotics}

Let $(\lambda_n)_{n\ge1}$ denote the Dirichlet eigenvalues of $H_{\omega}$, ordered increasingly. Throughout the rest of the paper we set
\[
    \lambda_n^+:=\max\{\lambda_n,0\},
    \qquad
    k_n:=\sqrt{\lambda_n^+}.
\]
Lemma~\ref{lem:rough-moment} below implies that, almost surely, $\lambda_n>0$ for all sufficiently large $n$, and that the exceptional event $\{\lambda_n\le0\}$ has faster-than-polynomial probability decay. Hence this convention does not affect any of the high-energy $L^p$ or almost-sure asymptotics. Before establishing the asymptotic results, it is appropriate to give some auxiliary parameter-uniform estimates for the Pr\"ufer phase and its derivatives first.

\begin{lemma}[Parameter-uniform phase estimates]\label{lem:phase-uniform}
Let
\[
    I_n=[K_n-1,K_n+1],\qquad K_n=\frac{n\pi}{L}.
\]
For every finite $p\ge2$ there is a constant $C_p$, independent of $n$, such that, for all sufficiently large $n$,
\begin{align*}
    \left\|\sup_{k\in I_n}\sup_{x\le L}|\theta_x(k)-kx|\right\|_{L^p(\Omega)}
    &\le C_p n^{-1},\\
    \left\|\sup_{k\in I_n}\sup_{x\le L}|\partial_k\theta_x(k)-x|\right\|_{L^p(\Omega)}
    &\le C_p n^{-1}.
\end{align*}
\end{lemma}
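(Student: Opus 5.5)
The idea is to reduce the supremum over the parameter $k\in I_n$ to pointwise-in-$k$ moment bounds, using the fundamental theorem of calculus in $k$. This requires fixed-$k$ estimates for $\delta_x(k)=\theta_x(k)-kx$ (Proposition~\ref{prop:angle-expansion}), for $\xi_x(k):=\partial_k\theta_x(k)-x$, and for $\zeta_x(k):=\partial_k^2\theta_x(k)$. Since $I_n$ has length $2$, Minkowski's integral inequality gives
\[
\Bigl\|\sup_{k\in I_n}\sup_{x\le L}|\xi_x(k)|\Bigr\|_{L^p}
\le \Bigl\|\sup_{x\le L}|\xi_x(K_n)|\Bigr\|_{L^p}
+\int_{I_n}\Bigl\|\sup_{x\le L}|\zeta_x(k)|\Bigr\|_{L^p}\dd k .
\]
The analogous inequality for $\delta$ has $\xi$ in the integrand. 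Hence it suffices to show, uniformly for $k\in I_n$ (so $k\asymp n$), that
\[
\Bigl\|\sup_{x\le L}|\delta_x(k)|\Bigr\|_{L^p}+\Bigl\|\sup_{x\le L}|\xi_x(k)|\Bigr\|_{L^p}+\Bigl\|\sup_{x\le L}|\zeta_x(k)|\Bigr\|_{L^p}\le C_p k^{-1}.
\]
The first of these three bounds is already contained in Proposition~\ref{prop:angle-expansion}.

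First I will justify that we may differentiate in $k$. The angle equation \eqref{eq:prufer-angle} has coefficients $k$, $-\frac{\rho}{k}h(\theta)$ and $\frac{\rho^2}{k^2}g(\theta)$. These are smooth in $(\theta,k)$ on $\{k\ge1\}$ and bounded together with all their derivatives, and the initial condition $\theta_0=0$ does not depend on $k$. By Kunita's theory of stochastic flows depending on a parameter, there is a version of $\theta_x(k)$ that is jointly continuous in $(x,k)$ and $C^2$ in $k$. Its derivatives $\eta=\partial_k\theta$ and $\zeta=\partial_k^2\theta$ solve the formally differentiated equations. Joint continuity also makes the suprema over $k\in I_n$ measurable.

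Next I will derive the equation for $\xi=\eta-x$. Differentiating \eqref{eq:prufer-angle} in $k$ gives
\[
\dd\xi_x=\Bigl(\frac{\rho}{k^2}h(\theta_x)-\frac{\rho}{k}h'(\theta_x)(x+\xi_x)\Bigr)\dd B_x+O\bigl(k^{-2}(1+|\xi_x|)\bigr)\dd x,\qquad \xi_0=0 .
\]
For $p\ge2$, set $\Phi(x)=\E\sup_{s\le x}|\xi_s|^p$. BDG and H\"older's inequality give
\[
\Phi(x)\le C_pk^{-p}+C_pk^{-p}\int_0^x\Phi(s)\,\dd s .
\]
Gronwall's inequality then yields $\|\sup_x|\xi_x(k)|\|_{L^p}\le C_pk^{-1}$. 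To keep $\Phi$ finite a priori, I would first run this argument with a stopping-time localization.

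Differentiating once more gives a linear equation for $\zeta$ of the form
\[
\dd\zeta_x=-\frac{\rho}{k}h'(\theta_x)\zeta_x\,\dd B_x+F_x\,\dd B_x+G_x\,\dd x,
\]
where the forcing terms are
\[
F=-\frac{\rho}{k}h''(\theta)\eta^2+\frac{2\rho}{k^2}h'(\theta)\eta-\frac{2\rho}{k^3}h(\theta),\qquad G=O\bigl(k^{-2}(1+\eta^2+|\zeta|)\bigr).
\]
Since $\eta=x+\xi$ has bounded moments of every order, we get $\|F\|_{L^p}\lesssim k^{-1}$ uniformly in $x$. The same BDG and Gronwall argument, applied with moment $2p$ for $\eta^2$, gives $\|\sup_x|\zeta_x(k)|\|_{L^p}\le C_pk^{-1}$. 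Combining the three fixed-$k$ bounds with the displayed Minkowski reductions, and using $k^{-1}\asymp n^{-1}$ on $I_n$, proves both estimates.

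I expect the main obstacle to be the parameter-regularity step: obtaining a version of $\theta_x(k)$ that is jointly continuous and twice differentiable in $k$, with the derivatives given by the differentiated SDEs, so that the pathwise identity $f(k)=f(K_n)+\int_{K_n}^k\partial_kf$ holds simultaneously for all $k$ almost surely. I would handle this by citing Kunita's differentiability theorem for SDE flows with parameters. The alternative is to check the Kolmogorov continuity criterion directly for difference quotients, using the same linear-SDE moment estimates.
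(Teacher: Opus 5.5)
Your proposal is correct and follows essentially the same route as the paper. Both arguments prove fixed-frequency BDG moment bounds of order $k^{-1}$ for $\theta_x(k)-kx$, $\partial_k\theta_x(k)-x$ and $\partial_k^2\theta_x(k)$ from the differentiated Pr\"ufer equations, and then pass to the supremum over $k\in I_n$ by a one-dimensional Sobolev-type argument in $k$. The differences are minor: you close the linear moment estimates with Gronwall and a stopping-time localization instead of absorbing terms for large $k$, and you use the pathwise fundamental theorem of calculus anchored at $K_n$ with Minkowski's integral inequality in place of the Banach-valued $W^{1,p}$ Sobolev inequality.
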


\begin{proof}
We write the proof in some detail, since this is the point where one has to distinguish fixed-frequency BDG estimates from estimates which are uniform in the parameter $k$.

First fix a deterministic $k\ge1$ and put
\[
    \delta_x(k):=\theta_x(k)-kx .
\]
The integrated equation \eqref{eq:delta-int} gives
\[
    \delta_x(k)
    =-\frac{\rho}{k}\int_0^x h(\theta_s(k))\,\dd B_s
      +\frac{\rho^2}{k^2}\int_0^x g(\theta_s(k))\,\dd s .
\]
Since $h$ and $g$ are bounded, BDG implies
\[
\begin{aligned}
    \left\|\sup_{x\le L}|\delta_x(k)|\right\|_{L^p(\Omega)}
    &\le \frac{C_p}{k}
       \left\|\left(\int_0^L |h(\theta_s(k))|^2\,\dd s\right)^{1/2}\right\|_{L^p(\Omega)}
       +\frac{C}{k^2}\int_0^L\dd s  \\
    &\le C_p k^{-1}.
\end{aligned}
\]
This gives the first fixed-frequency bound.

Next set
\[
    \zeta_x(k):=\partial_k\theta_x(k),\qquad e_x(k):=\zeta_x(k)-x .
\]
The differentiability with respect to $k$ follows from the usual smooth parameter-dependence theorem for strong solutions of SDEs. In the estimates below we work on the compact intervals $I_n=[K_n-1,K_n+1]$; for all large $n$, these intervals lie in a fixed half-line $[k_0,\infty)$ with $k_0>0$. On such sets the coefficients
\[
    b(k,u)=k+\frac{\rho^2}{k^2}g(u),
    \qquad
    a(k,u)=-\frac{\rho}{k}h(u)
\]
are smooth in $(k,u)$, the relevant $k$-derivatives are locally bounded, and all derivatives in $u$ are bounded uniformly. Hence, after choosing a modification if necessary, $(k,x)\mapsto\theta_x(k)$ is twice continuously differentiable in $k$ on compact $k$-intervals, uniformly for $x\in[0,L]$ almost surely, and the $k$-derivatives solve the equations obtained by differentiating the stochastic equation. Differentiating \eqref{eq:prufer-angle} gives
\[
\begin{aligned}
    \dd\zeta_x
    &=\dd x+\frac{\rho}{k^2}h(\theta_x)\,\dd B_x
      -\frac{\rho}{k}h'(\theta_x)\zeta_x\,\dd B_x \\
    &\quad -\frac{2\rho^2}{k^3}g(\theta_x)\,\dd x
      +\frac{\rho^2}{k^2}g'(\theta_x)\zeta_x\,\dd x,
    \qquad \zeta_0=0.
\end{aligned}
\]
Subtracting $x$ and using $\zeta_s=s+e_s$, we obtain
\[
\begin{aligned}
    e_x
    ={}&\frac{\rho}{k^2}\int_0^x h(\theta_s)\,\dd B_s
      -\frac{\rho}{k}\int_0^x h'(\theta_s)s\,\dd B_s
      -\frac{\rho}{k}\int_0^x h'(\theta_s)e_s\,\dd B_s \\
    &-\frac{2\rho^2}{k^3}\int_0^x g(\theta_s)\,\dd s
      +\frac{\rho^2}{k^2}\int_0^x g'(\theta_s)s\,\dd s
      +\frac{\rho^2}{k^2}\int_0^x g'(\theta_s)e_s\,\dd s .
\end{aligned}
\]
Let
\[
    A_p(k):=\left\|\sup_{x\le L}|e_x(k)|\right\|_{L^p(\Omega)} .
\]
For the deterministic drift terms, boundedness of $g'$ gives
\[
    \left\|\sup_{x\le L}\left|\frac{\rho^2}{k^2}\int_0^x g'(\theta_s)s\,\dd s\right|\right\|_{L^p(\Omega)}
    \le Ck^{-2},
\]
Similarly, the term with $g$ is $O(k^{-3})$. For the last drift term,
\[
\begin{aligned}
    \left\|\sup_{x\le L}\left|\frac{\rho^2}{k^2}\int_0^x g'(\theta_s)e_s\,\dd s\right|\right\|_{L^p(\Omega)}
    &\le Ck^{-2}\int_0^L \|e_s\|_{L^p(\Omega)}\,\dd s  \\
    &\le Ck^{-2}A_p(k).
\end{aligned}
\]
For the stochastic terms, BDG gives
\[
    \left\|\sup_{x\le L}\left|\frac{\rho}{k^2}\int_0^x h(\theta_s)\,\dd B_s\right|\right\|_{L^p(\Omega)}
    \le C_p k^{-2},
\]
\[
    \left\|\sup_{x\le L}\left|\frac{\rho}{k}\int_0^x h'(\theta_s)s\,\dd B_s\right|\right\|_{L^p(\Omega)}
    \le \frac{C_p}{k}\left(\int_0^L s^2\,\dd s\right)^{1/2}
    \le C_p k^{-1},
\]
and
\[
\begin{aligned}
    \left\|\sup_{x\le L}\left|\frac{\rho}{k}\int_0^x h'(\theta_s)e_s\,\dd B_s\right|\right\|_{L^p(\Omega)}
    &\le \frac{C_p}{k}
      \left\|\left(\int_0^L |e_s|^2\,\dd s\right)^{1/2}\right\|_{L^p(\Omega)}  \\
    &\le C_p k^{-1}A_p(k).
\end{aligned}
\]
Combining these estimates yields
\[
    A_p(k)\le C_p k^{-1}+C_p(k^{-1}+k^{-2})A_p(k).
\]
Choose $k_0$ so large that $C_p(k^{-1}+k^{-2})\le1/2$ for $k\ge k_0$. Then the last term can be absorbed into the left-hand side, and hence
\begin{equation}\label{eq:e-fixed-bound}
    \left\|\sup_{x\le L}|\partial_k\theta_x(k)-x|\right\|_{L^p(\Omega)}
    =A_p(k)
    \le C_p k^{-1}.
\end{equation}

We shall also need the same size bound for the $k$-derivative of $e$. Put
\[
    \eta_x(k):=\partial_k^2\theta_x(k)=\partial_k e_x(k).
\]
Differentiating the equation for $\zeta$ once more gives
\[
\begin{aligned}
\dd\eta_x={}&
\left[-\frac{2\rho}{k^3}h(\theta_x)
      +\frac{2\rho}{k^2}h'(\theta_x)\zeta_x
      -\frac{\rho}{k}h''(\theta_x)\zeta_x^2
      -\frac{\rho}{k}h'(\theta_x)\eta_x\right]\dd B_x \\
&+\left[\frac{6\rho^2}{k^4}g(\theta_x)
      -\frac{4\rho^2}{k^3}g'(\theta_x)\zeta_x
      +\frac{\rho^2}{k^2}g''(\theta_x)\zeta_x^2
      +\frac{\rho^2}{k^2}g'(\theta_x)\eta_x\right]\dd x,
\end{aligned}
\]
with $\eta_0=0$. Since $\zeta_x=x+e_x$, the preceding estimate gives
\[
    \left\|\sup_{x\le L}|\zeta_x(k)|\right\|_{L^p(\Omega)}\le C_p.
\]
Using also that $h,h',h'',g,g',g''$ are bounded, BDG and H\"older's inequality imply, with
\[
    B_p(k):=\left\|\sup_{x\le L}|\eta_x(k)|\right\|_{L^p(\Omega)},
\]
that
\[
\begin{aligned}
B_p(k)
&\le C_p\left(k^{-3}+k^{-2}\left\|\sup_{x\le L}|\zeta_x|\right\|_{L^p(\Omega)}
        +k^{-1}\left\|\sup_{x\le L}|\zeta_x|^2\right\|_{L^p(\Omega)}\right) \\
&\quad +C_p k^{-1}
   \left\|\left(\int_0^L |\eta_s|^2\,\dd s\right)^{1/2}\right\|_{L^p(\Omega)} \\
&\quad +C_p\left(k^{-4}+k^{-3}\left\|\sup_{x\le L}|\zeta_x|\right\|_{L^p(\Omega)}
        +k^{-2}\left\|\sup_{x\le L}|\zeta_x|^2\right\|_{L^p(\Omega)}\right)
      +C_p k^{-2}\int_0^L\|\eta_s\|_{L^p(\Omega)}\,\dd s  \\
&\le C_p k^{-1}+C_p(k^{-1}+k^{-2})B_p(k).
\end{aligned}
\]
Here $p\ge2$ is used only to control $\|\sup_x|\zeta_x|^2\|_{L^p(\Omega)}$ by the fixed-frequency bounds in a higher moment. Enlarging $k_0$ if necessary, we absorb the last term and obtain
\begin{equation}\label{eq:eta-fixed-bound}
    \left\|\sup_{x\le L}|\partial_k^2\theta_x(k)|\right\|_{L^p(\Omega)}
    \le C_p k^{-1},
    \qquad k\ge k_0 .
\end{equation}

It remains to pass from deterministic $k$ to the supremum over $I_n$. We use the Banach-valued one-dimensional Sobolev inequality. If $f:I_n\to C[0,L]$ is absolutely continuous and $p>1$, then
\[
    \sup_{k\in I_n}\|f(k)\|_{C[0,L]}
    \le C
    \left(
      \left(\int_{I_n}\|f(k)\|_{C[0,L]}^p\,\dd k\right)^{1/p}
      +
      \left(\int_{I_n}\|f'(k)\|_{C[0,L]}^p\,\dd k\right)^{1/p}
    \right),
\]
where $C$ depends only on the length $|I_n|=2$. Apply this with
\[
    f(k)=\theta_\cdot(k)-k(\cdot),
    \qquad f'(k)=\partial_k\theta_\cdot(k)-\cdot.
\]
Taking $L^p(\Omega)$-norms and using Fubini's theorem, together with the fixed-frequency bounds, we obtain
\[
\begin{aligned}
&\left\|\sup_{k\in I_n}\sup_{x\le L}|\theta_x(k)-kx|\right\|_{L^p(\Omega)} \\
&\qquad \le C_p
    \left(\int_{I_n} k^{-p}\,\dd k\right)^{1/p}
    +C_p
    \left(\int_{I_n} k^{-p}\,\dd k\right)^{1/p}
    \le C_p n^{-1}.
\end{aligned}
\]
Applying the same Sobolev inequality with
\[
    f(k)=\partial_k\theta_\cdot(k)-\cdot,
    \qquad f'(k)=\partial_k^2\theta_\cdot(k),
\]
and using \eqref{eq:e-fixed-bound} and \eqref{eq:eta-fixed-bound}, we obtain
\[
    \left\|\sup_{k\in I_n}\sup_{x\le L}|\partial_k\theta_x(k)-x|\right\|_{L^p(\Omega)}
    \le C_p n^{-1}.
\]
This proves the lemma.
\end{proof}

\begin{lemma}[Positivity and rough moment bounds for the high-energy roots]\label{lem:rough-moment}
For every finite \(q,r\ge1\) there are constants \(C_q,C_r\) such that, for all sufficiently large \(n\),
\[
    \Prob(\lambda_n\le0)\le C_q n^{-q},
    \qquad
    \|k_n\|_{L^r(\Omega)}\le C_r n,
    \qquad
    \|\lambda_n\|_{L^r(\Omega)}\le C_r n^2 .
\]
Consequently, almost surely, \(\lambda_n>0\) for all sufficiently large \(n\), and then \(k_n=\sqrt{\lambda_n}\) eventually.
\end{lemma}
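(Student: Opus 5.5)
The plan is to separate the two parts of the statement. The moment bounds come from a pathwise form comparison. The positivity bound comes from oscillation theory combined with the fixed-frequency Pr\"ufer estimate of Proposition~\ref{prop:angle-expansion} and Chebyshev's inequality.

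\emph{Moment bounds.} Write $\sigma=\rho B$ and $S:=\sup_{x\le L}|\sigma(x)|$. For $y\in H_0^1(0,L)$, expanding the Dirichlet form gives
\[
\mathfrak t[y]=\int_0^L|y'|^2\,\dd x-2\,\mathrm{Re}\int_0^L\sigma y'\bar y\,\dd x .
\]
Young's inequality then yields the two-sided comparison
\[
\tfrac12\|y'\|^2-2S^2\|y\|^2\le \mathfrak t[y]\le 2\|y'\|^2+S^2\|y\|^2 .
\]
By the min--max principle, comparing with the Dirichlet Laplacian (eigenvalues $K_n^2$) gives, pathwise,
\[
-2S^2\le \tfrac12K_n^2-2S^2\le \lambda_n\le 2K_n^2+S^2 .
\]
By Doob's (or the BDG) inequality, $S$ has finite moments of every order. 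Hence $\|\lambda_n\|_{L^r(\Omega)}\le C_rn^2$, and since $k_n\le\sqrt{2}K_n+S$, also $\|k_n\|_{L^r(\Omega)}\le C_rn$. This part is elementary.

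\emph{Positivity.} I would use the Sturm oscillation count in Pr\"ufer form. For $k>0$, let $\theta_x(k)$ be the lifted angle of $u_k$ with $\theta_0=0$. Whenever $\sin\theta_x=0$, the noise coefficient $-\frac{\rho}{k}\sin^2\theta$ and the drift correction $\frac{\rho^2}{k^2}\sin^3\theta\cos\theta$ in \eqref{eq:prufer-angle} both vanish, so $\theta$ crosses each level $m\pi$ only upward. Pathwise this is cleanest in the quasi-derivative picture: at a zero of $u_k$ one has $P=Q$, and the angle has positive speed there. Consequently $\lfloor\theta_L(k)/\pi\rfloor$ equals the number of zeros of $u_k$ in $(0,L]$.

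The oscillation theorem for distributional potentials (as in \cite{SavchukShkalikov,HrynivMykytyuk2012}), or the same Pr\"ufer argument together with monotonicity of $\theta_L(k)$ in $k$ (a Wronskian/Lagrange identity computation), then shows that the number of Dirichlet eigenvalues below $k^2$ is at least $\lceil\theta_L(k)/\pi\rceil-1$. Apply this at the deterministic frequency $k=K_n/2$. If $\lambda_n\le 0<k^2$, then at least $n$ eigenvalues lie below $k^2$, which forces $\theta_L(K_n/2)\ge n\pi$, i.e.
\[
\delta_L(K_n/2)=\theta_L(K_n/2)-K_nL/2\ge n\pi/2 .
\]
Proposition~\ref{prop:angle-expansion} gives $\|\delta_L(k)\|_{L^p(\Omega)}\le C_pk^{-1}$. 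Chebyshev's inequality with $p=q$ then yields
\[
\Prob(\lambda_n\le 0)\le C_p n^{-p}(n\pi/2)^{-p}\le C_q n^{-q}.
\]
Taking $q=2$, the Borel--Cantelli lemma gives $\lambda_n>0$ for all large $n$ almost surely, and hence $k_n=\sqrt{\lambda_n}$ eventually.

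\emph{Main obstacle.} The delicate step is justifying the oscillation count rigorously and pathwise. This means identifying the index $n$ of an eigenvalue with the Pr\"ufer angle crossing levels $m\pi$, when the potential is only a distribution and the It\^o form \eqref{eq:prufer-angle} holds only almost surely for each fixed $k$. I would handle this by working with the deterministic quasi-derivative Pr\"ufer angle, defined through $(y,y^{[1]})$ for each fixed path. That angle coincides with the $(Y,P)$-angle modulo $\pi$ at zeros of $y$, since $\sigma Y=0$ there. One then invokes the deterministic oscillation theorem from the cited distributional Sturm--Liouville literature. Because the argument only needs a single deterministic frequency $K_n/2$, no uniformity in $k$ beyond Proposition~\ref{prop:angle-expansion} is required.
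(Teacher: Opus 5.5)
Your moment bounds follow the paper's argument: form comparison, min--max against the Dirichlet Laplacian, then moments of the Brownian path. The only difference is that you bound the cross term by $2S\|y'\|\|y\|$ with $S=\sup|\sigma|$, where the paper uses Gagliardo--Nirenberg and $\|\sigma\|_{L^2(0,L)}^4$; either works.

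For positivity you take a different and much heavier route, and as written it contains a logical slip. From $N(k^2):=\#\{j:\lambda_j<k^2\}\ge n$ you want to conclude $\theta_L(K_n/2)>n\pi$. That requires $N(k^2)\le\lceil\theta_L(k)/\pi\rceil-1$, not \emph{at least}. The zero-counting form of the oscillation theorem gives equality: for every real $\lambda$, $N(\lambda)$ equals the number of zeros in $(0,L)$ of the Dirichlet solution. So the step is repaired by citing it in that form. Your fallback, the Pr\"ufer argument plus monotonicity of $\theta_L(k)$ for $k>0$, cannot supply this direction. The $k$-parametrized angle only detects eigenvalues in $(0,\infty)$. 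An upper bound on $N(k^2)$ must account for every eigenvalue $\le0$, which are precisely the ones you are trying to rule out.

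More to the point, the detour is unnecessary, and the paper avoids it. Your own bound $\lambda_n\ge\frac12K_n^2-2S^2$ already gives $\{\lambda_n\le0\}\subset\{S\ge K_n/2\}$. Since $S=|\rho|\sup_{x\le L}|B_x|$ has moments of all orders (indeed Gaussian tails), Markov's inequality yields $\Prob(\lambda_n\le0)\le C_qn^{-q}$ for every $q$. The paper does exactly this, via the inclusion $\{\lambda_n\le0\}\subset\{\|\sigma_\omega\|_{L^2(0,L)}^4\ge cn^2\}$ and high moments of $\|B\|_{L^2(0,L)}$. This needs no oscillation theory, no Pr\"ufer angle and no input from Proposition~\ref{prop:angle-expansion}, so the \emph{main obstacle} you identify never arises.

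With the oscillation theorem invoked as an equality, your argument is a valid proof; the paper relies on the same theorem later, in Theorem~\ref{thm:eigenvalue-asymptotics}. Here, however, it buys nothing over the one-line consequence of the min--max bound you had already written down.
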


\begin{proof}
 The quadratic form corresponding to the quasi-derivative realization may be written as
\[
    \mathfrak t_\omega[y]
    =\int_0^L |y'|^2\,\dd x
      -2\operatorname{Re}\int_0^L \sigma_\omega y'\overline y\,\dd x,
    \qquad y\in H_0^1(0,L).
\]
By the one-dimensional Gagliardo--Nirenberg inequality and Young's inequality,
\[
    \left|2\int_0^L\sigma_\omega y'\overline y\,\dd x\right|
    \le C_L\|\sigma_\omega\|_{L^2(0,L)}\|y'\|_{L^2(0,L)}^{3/2}\|y\|_{L^2(0,L)}^{1/2}
    \le \frac12\|y'\|_{L^2(0,L)}^2
        +C_L\|\sigma_\omega\|_{L^2(0,L)}^4\|y\|_{L^2(0,L)}^2.
\]
Therefore
\[
    \frac12\|y'\|_{L^2(0,L)}^2
      -C\|\sigma_\omega\|_{L^2(0,L)}^4\|y\|_{L^2(0,L)}^2
    \le
    \mathfrak t_\omega[y]
    \le
    \frac32\|y'\|_{L^2(0,L)}^2
      +C\|\sigma_\omega\|_{L^2(0,L)}^4\|y\|_{L^2(0,L)}^2 .
\]
Let \(\mu_n=(n\pi/L)^2\) be the Dirichlet eigenvalues of \(-\dd^2/\dd x^2\). The min--max principle gives
\[
    \lambda_n(\omega)
    \ge \frac12 \mu_n-C\|\sigma_\omega\|_{L^2(0,L)}^4
    \ge c n^2-C\|\sigma_\omega\|_{L^2(0,L)}^4 .
\]
Similarly, by taking in the min--max principle the span of the first \(n\) Dirichlet sine functions,
\[
    \lambda_n(\omega)
    \le C n^2+C\|\sigma_\omega\|_{L^2(0,L)}^4 .
\]
The lower bound implies
\[
    \{\lambda_n\le0\}
    \subset
    \{\|\sigma_\omega\|_{L^2(0,L)}^4\ge c n^2\}.
\]
We now spell out the probabilistic estimate. Since
\[
    \|\sigma_\omega\|_{L^2(0,L)}=|\rho|\,\|B(\omega)\|_{L^2(0,L)},
\]
it is enough to use high moments of the Brownian \(L^2(0,L)\)-norm. For every integer \(m\ge1\), Jensen's inequality gives
\[
\begin{aligned}
    \E\|B\|_{L^2(0,L)}^{4m}
    &=\E\left(\int_0^L B_x^2\,\dd x\right)^{2m}  \\
    &\le L^{2m-1}\int_0^L \E |B_x|^{4m}\,\dd x
      = C_m\int_0^L x^{2m}\,\dd x <\infty,
\end{aligned}
\]
where we used that \(B_x\) is a centered Gaussian random variable with variance \(x\). Hence
\[
    \E\|\sigma_\omega\|_{L^2(0,L)}^{4m}<\infty
    \qquad\text{for every }m\ge1.
\]
By Markov's inequality, for every integer \(m\ge1\),
\[
\begin{aligned}
    \Prob(\lambda_n\le0)
    &\le \Prob\bigl(\|\sigma_\omega\|_{L^2(0,L)}^4\ge c n^2\bigr)  \\
    &=\Prob\bigl(\|\sigma_\omega\|_{L^2(0,L)}^{4m}\ge c^m n^{2m}\bigr)  \\
    &\le c^{-m}n^{-2m}\,
       \E\|\sigma_\omega\|_{L^2(0,L)}^{4m}
      \le C_m n^{-2m}.
\end{aligned}
\]
Given any \(q<\infty\), choose \(m\) so large that \(2m\ge q\). Then, after changing the constant,
\[
    \Prob(\lambda_n\le0)\le C_q n^{-q}.
\]
The Borel--Cantelli lemma then gives \(\lambda_n>0\) eventually almost surely.

Finally, the upper and lower bounds imply
\[
    |\lambda_n(\omega)|\le Cn^2+C\|\sigma_\omega\|_{L^2(0,L)}^4 ,
\]
and the upper bound together with the definition \(k_n=\sqrt{\lambda_n^+}\) implies
\[
    k_n(\omega)\le Cn+C\|\sigma_\omega\|_{L^2(0,L)}^2 .
\]
Taking \(L^r(\Omega)\)-norms and using again the finite moments of \(\|B\|_{L^2(0,L)}\) gives
\[
    \|k_n\|_{L^r(\Omega)}\le C_r n,
    \qquad
    \|\lambda_n\|_{L^r(\Omega)}\le C_r n^2 .
\]
\end{proof}

\begin{theorem}[Dirichlet eigenvalue expansion]\label{thm:eigenvalue-asymptotics}
Let
\[
    K_n:=\frac{n\pi}{L},
    \qquad
    M_n:=M_L(K_n)
    =
    \int_0^L
    \sin^2(K_ns)\,\dd B_s .
\]
Then, for every finite \(p\ge1\),
\[
    k_n
    =
    K_n+\frac{\rho}{n\pi}M_n+O_{L^p}(n^{-2}),
    \qquad n\to\infty.
\]
Equivalently,
\[
    k_n
    =
    \frac{n\pi}{L}
    +
    \frac{\rho}{n\pi}
    \int_0^L
    \sin^2\left(\frac{n\pi s}{L}\right)\,\dd B_s
    +
    O_{L^p}(n^{-2}).
\]
Moreover,
\[
    \lambda_n
    =
    K_n^2+\frac{2\rho}{L}M_n+O_{L^p}(n^{-1}).
\]
\end{theorem}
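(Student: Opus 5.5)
The plan is to solve the phase condition $\theta_L(k_n)=n\pi$ by a one-step Newton argument around the deterministic frequency $K_n$. The argument lives on a good event $G_n$, and on its complement we only use the rough bounds of Lemma~\ref{lem:rough-moment}.

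First, define the good event $G_n$ by three requirements:
\begin{itemize}
\item[(i)] $\lambda_n>0$;
\item[(ii)] $\sup_{k\in I_n}\sup_{x\le L}|\theta_x(k)-kx|\le \tfrac14$;
\item[(iii)] $\sup_{k\in I_n}|\partial_k\theta_L(k)-L|\le L/2$.
\end{itemize}
By Lemma~\ref{lem:phase-uniform}, Markov's inequality and Lemma~\ref{lem:rough-moment}, $\Prob(G_n^c)\le C_q n^{-q}$ for every $q$.

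On $G_n$ the map $k\mapsto\theta_L(k)$ is strictly increasing on $I_n$, with slope at least $L/2$. It also satisfies $\theta_L(K_n\pm1)\gtrless n\pi$, because $\pm L$ dominates the error $\tfrac14$ once $L\ge1$; for small $L$ one shrinks $I_n$ or rescales the thresholds accordingly. Hence there is a unique $k^*\in I_n$ with $\theta_L(k^*)=n\pi$.

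We must identify $k^*$ with $k_n$. The lifted angle is monotone in $k$ globally; this is the standard Sturm oscillation fact for the Dirichlet problem, since the $n$-th eigenvalue corresponds to $\theta_L=n\pi$. On $G_n$ the rough bound (ii), together with monotonicity, shows that $\theta_L(k)\in(n\pi-\pi,n\pi+\pi)$ forces $k$ near $K_n$. So the $n$-th root is $k^*=k_n$. This identification is where I expect most care to be needed. If the global monotonicity of $\theta_L$ in $k$ is not available directly from the paper, I would instead invoke the oscillation theorem: the $n$-th Dirichlet eigenfunction has $n-1$ interior zeros, so $\theta_L(k_n)=n\pi$ exactly. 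Then (ii) forces $|k_nL-n\pi|\le\tfrac14$, so $k_n\in I_n$.

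Next, on $G_n$ apply the mean value theorem:
\[
0=\theta_L(k_n)-n\pi=\theta_L(K_n)-K_nL+\partial_k\theta_L(\xi)(k_n-K_n).
\]
By Proposition~\ref{prop:angle-expansion} at the deterministic frequency $K_n$,
\[
\theta_L(K_n)-K_nL=-\frac{\rho}{K_n}M_n+O_{L^p}(n^{-2}).
\]
This gives $|k_n-K_n|\le \tfrac{2}{L}\,|\theta_L(K_n)-K_nL|$, so $k_n-K_n=O_{L^p}(n^{-1})$ on $G_n$. Now write $\partial_k\theta_L(\xi)=L+e$, where $\sup_{I_n}|e|=O_{L^p}(n^{-1})$ by Lemma~\ref{lem:phase-uniform}. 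Solving,
\[
k_n-K_n=\frac{\rho M_n}{K_nL}+\frac{1}{L}\Bigl(O_{L^p}(n^{-2})-e\,(k_n-K_n)\Bigr).
\]
By H\"older, the product $e\,(k_n-K_n)$ is $O_{L^p}(n^{-2})$, using $L^{2p}$ bounds on each factor. Since $K_nL=n\pi$, the leading term is $\rho M_n/(n\pi)$, which is the claimed expansion on $G_n$.

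On $G_n^c$, the remainder $k_n-K_n-\rho M_n/(n\pi)$ is bounded in $L^{2p}$ by $Cn$, using $\|k_n\|_{L^r}\le C_rn$ and the Gaussian bound on $M_n$. Cauchy--Schwarz then gives an $L^p$ contribution of at most $Cn\,\Prob(G_n^c)^{1/2p}=O(n^{-2})$, by choosing $q$ large. For $p\in[1,2)$ we use the monotonicity of $L^p$ norms.

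Finally, for $\lambda_n$, write $\lambda_n=k_n^2$ on $\{\lambda_n>0\}$. Then $k_n^2-K_n^2=(k_n-K_n)(k_n+K_n)$. Here $k_n+K_n=2K_n+O_{L^p}(n^{-1})$ and $k_n-K_n=\rho M_n/(n\pi)+O_{L^p}(n^{-2})$. The main term is $2K_n\rho M_n/(n\pi)=2\rho M_n/L$, and all cross terms are $O_{L^p}(n^{-1})$ by H\"older. On the event $\{\lambda_n\le0\}$, we use $\|\lambda_n\|_{L^r}\le Cn^2$ together with the superpolynomially small probability of that event.
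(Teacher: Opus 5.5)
Your proposal is correct and follows essentially the paper's argument. The paper likewise builds a good event from Lemma~\ref{lem:phase-uniform} on which $\theta_L$ is strictly increasing on $I_n$ and crosses $n\pi$ once, identifies that root with $k_n$ by Sturm oscillation, and expands around $K_n$ via Proposition~\ref{prop:angle-expansion}, writing your mean-value term as $\int_{K_n}^{k_n}(\partial_k\theta_L(u)-L)\,\dd u$ and controlling it by H\"older. It then removes the bad event using Lemma~\ref{lem:rough-moment}.

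One small caution: your fallback identification is circular as written, because (ii) controls $\theta_L(k)$ only for $k\in I_n$ and so cannot be used to place $k_n$ in $I_n$. Argue in the other direction, as the paper does: $\theta_L(k^*)=n\pi$ makes $u_{k^*}$ a Dirichlet eigenfunction with exactly $n-1$ interior zeros, hence $(k^*)^2=\lambda_n$.
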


\begin{proof}
It is enough to prove the estimate for $p\ge2$; the case $1\le p<2$ follows by monotonicity of $L^p$-norms. Put
\[
    I_n=[K_n-1,K_n+1]
\]
and define
\[
    E_n:=\sup_{k\in I_n}|\theta_L(k)-kL|,
    \qquad
    D_n:=\sup_{k\in I_n}|\partial_k\theta_L(k)-L|.
\]
By Lemma~\ref{lem:phase-uniform}, for every finite $q\ge2$,
\[
    \|E_n\|_{L^q(\Omega)}+\|D_n\|_{L^q(\Omega)}\le C_q n^{-1}.
\]
Let
\[
    G_n:=\{E_n<L/2\}\cap\{D_n<L/2\}.
\]
Then Markov's inequality gives, for every finite $q$,
\[
    \Prob(G_n^c)\le C_q n^{-q}.
\]
On $G_n$ we have $\partial_k\theta_L(k)\ge L/2$ throughout $I_n$. Hence $k\mapsto\theta_L(k)$ is strictly increasing on $I_n$. Moreover,
\[
    \theta_L(K_n+1)\ge n\pi+L-E_n>n\pi,
    \qquad
    \theta_L(K_n-1)\le n\pi-L+E_n<n\pi.
\]
Therefore there is a unique solution of $\theta_L(k)=n\pi$ in $I_n$. By the Sturm oscillation theorem for the Dirichlet problem with distributional potentials (in the quasi-derivative formulation, see for example \cite{SavchukShkalikov,HrynivMykytyuk2012}), this solution is the Dirichlet root $k_n$ corresponding to the phase level $n\pi$. In addition, if $r>E_n/L$, then
\[
    \theta_L(K_n+r)>n\pi,
    \qquad
    \theta_L(K_n-r)<n\pi,
\]
so by monotonicity
\[
    |k_n-K_n|\le \frac{E_n}{L}
    \qquad\text{on }G_n.
\]
Consequently,
\[
    \bigl\||k_n-K_n|\mathbf 1_{G_n}\bigr\|_{L^p(\Omega)}\le C_p n^{-1}.
\]
On the complement, Lemma~\ref{lem:rough-moment} and H\"older's inequality give, after choosing $q$ sufficiently large,
\[
\begin{aligned}
    \bigl\||k_n-K_n|\mathbf 1_{G_n^c}\bigr\|_{L^p(\Omega)}
    &\le \|k_n-K_n\|_{L^{2p}(\Omega)}\Prob(G_n^c)^{1/(2p)}  \\
    &\le C_p n^{-1}.
\end{aligned}
\]
Thus
\[
    k_n-K_n=O_{L^p}(n^{-1}).
\]

We now refine the estimate. Since $K_nL=n\pi$, Proposition~\ref{prop:angle-expansion} at the deterministic frequency $K_n$ gives
\[
    \theta_L(K_n)-n\pi
    =-
    \frac{\rho}{K_n}M_L(K_n)+O_{L^p}(n^{-2}).
\]
On $G_n$, the mean value formula gives
\[
\begin{aligned}
0
&=\theta_L(k_n)-n\pi \\
&=\theta_L(K_n)-n\pi
  +L(k_n-K_n)
  +\int_{K_n}^{k_n}\bigl(\partial_k\theta_L(u)-L\bigr)\,\dd u .
\end{aligned}
\]
The last term is $O_{L^p}(n^{-2})$ on $G_n$, because
\[
\begin{aligned}
&\left\|
    \mathbf 1_{G_n}
    \int_{K_n}^{k_n}\bigl(\partial_k\theta_L(u)-L\bigr)\,\dd u
\right\|_{L^p(\Omega)}  \\
&\qquad\le
\left\|\sup_{u\in I_n}|\partial_k\theta_L(u)-L|\right\|_{L^{2p}(\Omega)}
\bigl\||k_n-K_n|\mathbf 1_{G_n}\bigr\|_{L^{2p}(\Omega)}
\le C_p n^{-2}.
\end{aligned}
\]
Therefore, on $G_n$ in $L^p$,
\[
    L(k_n-K_n)=\frac{\rho}{K_n}M_L(K_n)+O_{L^p}(n^{-2}).
\]
It remains only to remove the indicator of $G_n$. The variable
\[
    k_n-K_n-\frac{\rho}{LK_n}M_L(K_n)
\]
has $L^r$-norm bounded by $C_r n$ for every finite $r$, by Lemma~\ref{lem:rough-moment} and the BDG estimate for $M_L(K_n)$. Since $\Prob(G_n^c)\le C_q n^{-q}$ for arbitrary finite $q$, H\"older's inequality makes the contribution of $G_n^c$ equal to $O_{L^p}(n^{-2})$. Hence
\[
    L(k_n-K_n)=\frac{\rho}{K_n}M_L(K_n)+O_{L^p}(n^{-2}),
\]
and since $LK_n=n\pi$,
\[
    k_n-K_n
    =\frac{\rho}{LK_n}M_L(K_n)+O_{L^p}(n^{-2})
    =\frac{\rho}{n\pi}M_L(K_n)+O_{L^p}(n^{-2}).
\]
It remains to derive the displayed expansion for \(\lambda_n\). On the event \(\{\lambda_n>0\}\), we have \(\lambda_n=k_n^2\), and hence
\[
\begin{aligned}
    \lambda_n-K_n^2
    &=2K_n(k_n-K_n)+(k_n-K_n)^2  \\
    &=\frac{2\rho K_n}{n\pi}M_L(K_n)+O_{L^p}(n^{-1})
     =\frac{2\rho}{L}M_L(K_n)+O_{L^p}(n^{-1}).
\end{aligned}
\]
The contribution of the event \(\{\lambda_n\le0\}\) is also \(O_{L^p}(n^{-1})\). Indeed, by Lemma~\ref{lem:rough-moment}, the variable
\[
    \lambda_n-K_n^2-\frac{2\rho}{L}M_L(K_n)
\]
has \(L^r\)-norm bounded by \(C_rn^2\), while \(\Prob(\lambda_n\le0)\le C_qn^{-q}\) for arbitrary finite \(q\); H\"older's inequality with \(q\) sufficiently large gives the claim.
This proves the theorem.
\end{proof}

\begin{corollary}[Almost-sure high-energy bound]
\label{cor:as-bound}
Let
\[
    K_n=\frac{n\pi}{L}.
\]
Then, for every \(\varepsilon>0\),
\[
    k_n=K_n+O(n^{-1+\varepsilon})
    \qquad\text{almost surely}.
\]
Consequently,
\[
    \lambda_n=K_n^2+O(n^{\varepsilon})
    \qquad\text{almost surely}
\]
for every \(\varepsilon>0\).
\end{corollary}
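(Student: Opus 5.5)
The plan is to convert the $L^p$ bounds of Theorem~\ref{thm:eigenvalue-asymptotics} into almost-sure bounds by a Markov inequality followed by the Borel--Cantelli lemma. First I get a moment bound for $k_n-K_n$ itself. By Theorem~\ref{thm:eigenvalue-asymptotics},
\[
    k_n-K_n=\frac{\rho}{n\pi}M_n+O_{L^p}(n^{-2}).
\]
The BDG inequality bounds $\|M_n\|_{L^p(\Omega)}\le C_p$ uniformly in $n$; this is the bound on $\sup_x|M_x(k)|$ in Proposition~\ref{prop:angle-expansion}, and here it is even more direct because $M_n$ is Gaussian with variance at most $L$. Together these give $\|k_n-K_n\|_{L^p(\Omega)}\le C_p n^{-1}$ for every finite $p$. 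This rough $O_{L^p}(n^{-1})$ bound was in fact already shown as the first step of the proof of Theorem~\ref{thm:eigenvalue-asymptotics}.

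Next, fix $\varepsilon>0$ and apply Markov's inequality:
\[
    \Prob\bigl(|k_n-K_n|>n^{-1+\varepsilon}\bigr)
    \le n^{p(1-\varepsilon)}\E|k_n-K_n|^p
    \le C_p n^{-p\varepsilon}.
\]
Choosing $p>1/\varepsilon$ makes the right-hand side summable in $n$. By Borel--Cantelli, almost surely $|k_n-K_n|\le n^{-1+\varepsilon}$ for all sufficiently large $n$, which is the first claim. Since $\varepsilon$ ranges over a countable set such as $\{1/m\}$, a single null set serves for every $\varepsilon$.

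For the eigenvalues, Lemma~\ref{lem:rough-moment} shows that almost surely $\lambda_n>0$ eventually, so $\lambda_n=k_n^2$ for large $n$. Then
\[
    \lambda_n-K_n^2=2K_n(k_n-K_n)+(k_n-K_n)^2=O(n\cdot n^{-1+\varepsilon})+O(n^{-2+2\varepsilon})=O(n^\varepsilon)
\]
almost surely.

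No step here is a serious obstacle. The only points needing care are using a moment exponent $p$ that depends on $\varepsilon$, which the theorem allows since it holds for every finite $p$, and handling the convention $k_n=\sqrt{\lambda_n^+}$, which Lemma~\ref{lem:rough-moment} resolves almost surely.
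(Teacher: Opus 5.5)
Your proposal is correct and follows essentially the same route as the paper: Markov's inequality with $p>1/\varepsilon$, then Borel--Cantelli, then $\lambda_n-K_n^2=(k_n-K_n)(k_n+K_n)$ once Lemma~\ref{lem:rough-moment} guarantees $\lambda_n>0$ eventually. The only difference is cosmetic. You apply Markov directly to $k_n-K_n$ through the bound $\|k_n-K_n\|_{L^p(\Omega)}\le C_pn^{-1}$, whereas the paper applies it separately to $\frac{\rho}{n\pi}M_n$ and to the $O_{L^p}(n^{-2})$ remainder.
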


\begin{proof}
By Theorem \ref{thm:eigenvalue-asymptotics}, for every finite \(p\ge1\),
\[
    k_n=K_n+\frac{\rho}{n\pi}M_n+R_n,
    \qquad
    \|M_n\|_{L^p(\Omega)}\le C_p,
    \qquad
    \|R_n\|_{L^p(\Omega)}\le C_p n^{-2},
\]
where
\[
    M_n=\int_0^L\sin^2(K_ns)\,\dd B_s .
\]
Here the bound on \(M_n\) follows directly from the BDG inequality, since the integrand is deterministic and bounded.

Fix \(\varepsilon>0\). Choose \(p>1/\varepsilon\). By Markov's inequality,
\[
\begin{aligned}
    \Prob\left(\left|\frac{\rho}{n\pi}M_n\right|>n^{-1+\varepsilon}\right)
    &\le C_p n^{-p\varepsilon},\\
    \Prob\left(|R_n|>n^{-1+\varepsilon}\right)
    &\le C_p n^{-p(1+\varepsilon)}.
\end{aligned}
\]
Both right-hand sides are summable in \(n\). The Borel--Cantelli lemma therefore gives
\[
    \frac{\rho}{n\pi}M_n=O(n^{-1+\varepsilon}),
    \qquad
    R_n=O(n^{-1+\varepsilon})
    \qquad\text{almost surely}.
\]
This proves
\[
    k_n=K_n+O(n^{-1+\varepsilon})
    \qquad\text{almost surely}.
\]
Finally, by Lemma~\ref{lem:rough-moment}, almost surely \(\lambda_n>0\) for all sufficiently large \(n\). For those \(n\), \(\lambda_n=k_n^2\), and hence
\[
    \lambda_n-K_n^2=(k_n-K_n)(k_n+K_n).
\]
Since \(K_n\sim n\) and \(k_n-K_n=O(n^{-1+\varepsilon})\) almost surely, we obtain
\[
    \lambda_n=K_n^2+O(n^{\varepsilon})
    \qquad\text{almost surely}.
\]
\end{proof}

\section{Asymptotic results for eigenfunctions}

For $k>0$, let $(Y_x(k),P_x(k))_{0\le x\le L}$ be the solution of the initial value problem
\[
    \dd Y_x=P_x\,\dd x,
    \qquad
    \dd P_x=-k^2Y_x\,\dd x+\rho Y_x\,\dd B_x,
    \qquad
    Y_0=0,\quad P_0=k.
\]
We write
\[
    u_k(x):=Y_x(k),
    \qquad u_k'(x)=P_x(k).
\]
Equivalently, $u_k$ is the Dirichlet initial solution of
\[
    -u_k''+\rho\dot B\,u_k=k^2u_k
\]
in the quasi-derivative sense, normalized by $u_k(0)=0$ and $u_k'(0)=k$. The variation-of-constants formula gives the stochastic Volterra equation
\begin{equation}\label{eq:volterra}
    u_k(x)=\sin(kx)+\frac{\rho}{k}\int_0^x\sin(k(x-s))u_k(s)\,\dd B_s.
\end{equation}

\begin{proposition}[First-order eigenfunction expansion]\label{prop:eigenfunction}
For every $p\in[2,\infty)$ there exist constants $C_p<\infty$ and $k_0\ge1$ such that, for all $k\ge k_0$,
\begin{equation}\label{eq:uk-expansion}
    \left\|\sup_{0\le x\le L}\left|u_k(x)-\sin(kx)-\frac{\rho}{k}\int_0^x\sin(k(x-s))\sin(ks)\,\dd B_s\right|\right\|_{L^p(\Omega)}
    \le C_p k^{-2}.
\end{equation}
For random frequencies such as the eigenvalues $k_n$, the deterministic-frequency version of this expansion is stated below in Corollary~\ref{cor:as-eigenfunction}.
\end{proposition}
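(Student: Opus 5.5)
We prove \eqref{eq:uk-expansion} by two Picard iterations of the stochastic Volterra equation \eqref{eq:volterra}, controlling the stochastic convolutions with the Burkholder--Davis--Gundy inequality. The key difficulty is that the integrand $\sin(k(x-s))u_k(s)$ depends on the upper limit $x$. Hence $x\mapsto\int_0^x\sin(k(x-s))u_k(s)\,\dd B_s$ is not a martingale, and BDG does not apply to it directly. We remove this dependence with the addition formula
\[
    \sin(k(x-s))=\sin(kx)\cos(ks)-\cos(kx)\sin(ks),
\]
which gives
\[
    \int_0^x\sin(k(x-s))f_s\,\dd B_s
    =\sin(kx)\int_0^x\cos(ks)f_s\,\dd B_s-\cos(kx)\int_0^x\sin(ks)f_s\,\dd B_s
\]
for any adapted $f$. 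Each integral on the right is a genuine martingale in $x$, and the prefactors are bounded by $1$. BDG therefore yields
\[
    \Bigl\|\sup_{x\le L}\Bigl|\int_0^x\sin(k(x-s))f_s\,\dd B_s\Bigr|\Bigr\|_{L^p(\Omega)}
    \le C_p\Bigl\|\Bigl(\int_0^L|f_s|^2\,\dd s\Bigr)^{1/2}\Bigr\|_{L^p(\Omega)}
    \le C_{p,L}\Bigl\|\sup_{s\le L}|f_s|\Bigr\|_{L^p(\Omega)}.
\]
We call this the \emph{convolution estimate}.

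\textbf{Step 1: a priori moment bound.} We show $\|\sup_{x\le L}|u_k(x)|\|_{L^p(\Omega)}\le C_p$ uniformly in $k\ge k_0$. Put $U(t):=\E\sup_{x\le t}|u_k(x)|^p$. The convolution estimate applied on $[0,t]$, together with H\"older's inequality in time, gives
\[
    U(t)\le C_p+C_p\frac{|\rho|^p}{k^p}\int_0^tU(s)\,\dd s .
\]
Gronwall's lemma then bounds $U(L)$ uniformly for $k\ge1$. A priori finiteness of $U$ comes from the linear SDE \eqref{SDE}, or from a stopping-time localisation. Alternatively, we can take $k_0$ large so that $C_p|\rho|L^{1/2}/k\le 1/2$ and absorb the term into the left-hand side. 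Either route uses only moments of $u_k$ itself, so no Gaussian tail bound is needed. One could instead read the bound off the Pr\"ufer representation, since $|u_k|\le R$ and Lemma~\ref{lem:prufer} controls $\log R$, but the Gronwall argument is more direct.

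\textbf{Step 2: first-order remainder.} Set $v_k:=u_k-\sin(k\cdot)$. By \eqref{eq:volterra},
\[
    v_k(x)=\frac{\rho}{k}\int_0^x\sin(k(x-s))u_k(s)\,\dd B_s .
\]
The convolution estimate with $f=u_k$ and Step 1 give $\|\sup_x|v_k(x)|\|_{L^p(\Omega)}\le C_pk^{-1}$.

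\textbf{Step 3: second iteration.} Substituting $u_k=\sin(k\cdot)+v_k$ into \eqref{eq:volterra}, the quantity inside the norm in \eqref{eq:uk-expansion} is exactly
\[
    \frac{\rho}{k}\int_0^x\sin(k(x-s))v_k(s)\,\dd B_s .
\]
The convolution estimate with $f=v_k$ and Step 2 bound this by $C_pk^{-1}\cdot k^{-1}=C_pk^{-2}$, which is \eqref{eq:uk-expansion}.

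Once the addition-formula decomposition is in place, every step reduces to BDG. The main obstacle is Step 1: we need the moment bound on $\sup_x|u_k|$ to hold uniformly in $k$. The $1/k$ prefactor of the noise term is what makes it uniform, so the Gronwall argument, or the absorption for $k\ge k_0$, closes without any further input.
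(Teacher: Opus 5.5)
Your proposal is correct and follows the paper's own proof: you split the kernel by the addition formula into the two martingales $\int_0^x\cos(ks)u_k(s)\,\dd B_s$ and $\int_0^x\sin(ks)u_k(s)\,\dd B_s$, apply BDG with absorption for large $k$ to bound $\sup_x|u_k|$ uniformly, and substitute once more into the Volterra equation to obtain the $O_{L^p}(k^{-2})$ remainder. Your remarks on the a priori finiteness of the moments, which the absorption step needs, and on the Gronwall alternative valid for all $k\ge1$ are small but useful points that the paper leaves implicit.
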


\begin{proof}
We first record the elementary decomposition
\[
    \sin(k(x-s))
    =
    \sin(kx)\cos(ks)-\cos(kx)\sin(ks).
\]
Thus the Volterra stochastic integral in \eqref{eq:volterra} can be written as
a linear combination, with bounded deterministic coefficients, of the two
martingales
\[
    C_x(k):=\int_0^x\cos(ks)u_k(s)\,\dd B_s,
    \qquad
    S_x(k):=\int_0^x\sin(ks)u_k(s)\,\dd B_s.
\]
The BDG inequality applied to these two martingales gives
\[
    \left\|\sup_{x\le L}|u_k(x)-\sin(kx)|\right\|_{L^p(\Omega)}
    \le
    \frac{C_p}{k}
    \left\|
        \left(\int_0^L |u_k(s)|^2\,\dd s\right)^{1/2}
    \right\|_{L^p(\Omega)}
    \le
    \frac{C_p}{k}
    \left\|\sup_{x\le L}|u_k(x)|\right\|_{L^p(\Omega)}.
\]
Since
\[
    \sup_{x\le L}|u_k(x)|
    \le 1+\sup_{x\le L}|u_k(x)-\sin(kx)|,
\]
the preceding estimate implies
\[
    \left\|\sup_{x\le L}|u_k(x)|\right\|_{L^p(\Omega)}
    \le 1+\frac{C_p}{k}
    \left\|\sup_{x\le L}|u_k(x)|\right\|_{L^p(\Omega)}.
\]
For $k\ge 2C_p$ the last term is absorbed into the left-hand side. Hence
\[
    \left\|\sup_{x\le L}|u_k(x)|\right\|_{L^p(\Omega)}\le C_p.
\]
Therefore
\[
    \left\|\sup_{x\le L}|u_k(x)-\sin(kx)|\right\|_{L^p(\Omega)}\le C_pk^{-1}.
\]
Substituting this estimate once more into \eqref{eq:volterra}, we obtain
\begin{align*}
&u_k(x)-\sin(kx)-\frac{\rho}{k}\int_0^x\sin(k(x-s))\sin(ks)\,\dd B_s\\
&\qquad=\frac{\rho}{k}\int_0^x\sin(k(x-s))\bigl(u_k(s)-\sin(ks)\bigr)\,\dd B_s.
\end{align*}
Using the same martingale decomposition of the kernel and BDG again yields the
\(O_{L^p}(k^{-2})\) bound uniformly in \(x\).
\end{proof}

\begin{lemma}[Deterministic-frequency and local derivative estimates]\label{lem:eigenfunction-uniform}
Let
\[
    K_n=\frac{n\pi}{L},\qquad \mathcal K_n=[K_n-1,K_n+1],
\]
and set
\[
    I_n(x):=\int_0^x\sin\left(K_n(x-s)\right)\sin(K_ns)\,\dd B_s .
\]
For every finite $p\ge2$ there exists $C_p$ such that, for all sufficiently large $n$,
\begin{align}
    \left\|\sup_{x\le L}
    \left|u_{K_n}(x)-\sin(K_nx)-\frac{\rho}{K_n}I_n(x)\right|\right\|_{L^p(\Omega)}
    &\le C_p n^{-2}, \label{eq:det-rem-un}\\
    \left\|\sup_{x\le L}|I_n(x)|\right\|_{L^p(\Omega)}
    &\le C_p, \label{eq:det-In-un}\\
    \E\left[\sup_{k\in\mathcal K_n}\sup_{x\le L}
    |\partial_k u_k(x)-x\cos(kx)|^p\right]
    &\le C_p n^{-p}. \label{eq:local-uk-der-un}
\end{align}
\end{lemma}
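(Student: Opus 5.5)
The first two estimates are fixed-frequency statements and follow from results already in hand. For \eqref{eq:det-rem-un} I will apply Proposition~\ref{prop:eigenfunction} at the deterministic frequency $k=K_n$; since $K_n\ge k_0$ for large $n$ and $K_n^{-2}\le Cn^{-2}$, this is immediate. For \eqref{eq:det-In-un} I will expand
\[
\sin(K_n(x-s))=\sin(K_nx)\cos(K_ns)-\cos(K_nx)\sin(K_ns),
\]
which writes $I_n(x)$ as a combination, with coefficients bounded by $1$, of the two martingales $\int_0^x\cos(K_ns)\sin(K_ns)\,\dd B_s$ and $\int_0^x\sin^2(K_ns)\,\dd B_s$. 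Each has a deterministic integrand bounded by $1$, so the BDG inequality bounds the $L^p$-norm of its supremum over $[0,L]$ by $C_pL^{1/2}$.

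The main work is \eqref{eq:local-uk-der-un}, and I will follow the pattern of Lemma~\ref{lem:phase-uniform}: first a fixed-$k$ bound for the derivative and for the second derivative, then the Banach-valued Sobolev inequality in $k$ on an interval of length $2$. By smooth dependence on parameters for the linear SDE, $v_k:=\partial_ku_k$ exists and is obtained by differentiating \eqref{eq:volterra}:
\[
v_k(x)=x\cos(kx)+\partial_k\Big(\frac{\rho}{k}\sin(k(x-s))\Big)\text{-terms}+\frac{\rho}{k}\int_0^x\sin(k(x-s))v_k(s)\,\dd B_s .
\]
Concretely, the middle terms are
\[
-\frac{\rho}{k^2}\int_0^x\sin(k(x-s))u_k\,\dd B_s+\frac{\rho}{k}\int_0^x(x-s)\cos(k(x-s))u_k\,\dd B_s .
\]
I will handle these by writing each kernel through the addition formulas as a sum of products of bounded deterministic functions of $x$ and of $s$; the factor $(x-s)=x-s$ splits as well. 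After this splitting, BDG applies to martingales in $s$, exactly as in the proof of Proposition~\ref{prop:eigenfunction}. Using $\|\sup_x|u_k|\|_{L^p}\le C_p$, the two middle terms are $O_{L^p}(k^{-1})$ uniformly in $x$. The last term is bounded by $C_pk^{-1}\|\sup_x|v_k|\|_{L^p}$. Since $\sup_x|x\cos(kx)|\le L$, the absorption argument gives first $\|\sup_x|v_k|\|_{L^p}\le C_p$, and then
\[
\Big\|\sup_x|v_k(x)-x\cos(kx)|\Big\|_{L^p}\le C_pk^{-1}.
\]

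Next I will differentiate once more to get $w_k=\partial_k^2u_k$. The function $w_k-\partial_k(x\cos kx)=w_k+x^2\sin(kx)$ satisfies a Volterra equation of the same form. All its inhomogeneous terms carry a prefactor $k^{-1}$ or smaller and involve $u_k,v_k$ against kernels that are polynomial in $(x-s)$ times trigonometric functions. The same splitting, BDG and absorption then give
\[
\Big\|\sup_x|\partial_k(v_k-x\cos kx)|\Big\|_{L^p}\le C_pk^{-1}.
\]
Finally I will apply the Sobolev inequality to $f(k)=v_k(\cdot)-(\cdot)\cos(k\,\cdot)$ on $\mathcal K_n$, taking values in $C[0,L]$. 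Together with Fubini this yields
\[
\Big\|\sup_{k\in\mathcal K_n}\sup_x|f(k)(x)|\Big\|_{L^p}\le C_p\Big(\int_{\mathcal K_n}k^{-p}\,\dd k\Big)^{1/p}\le C_pn^{-1},
\]
and raising to the $p$-th power gives \eqref{eq:local-uk-der-un}.

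The main obstacle I anticipate is in the BDG step with $x$-dependent kernels, which requires the supremum over $x$ to be taken outside the stochastic integral. This is why each kernel must be split explicitly into finitely many products $a(x)b(s)$ with bounded deterministic $a$. For the $(x-s)^j$ factors the splitting is elementary, so it is only bookkeeping, but it has to be done carefully at the second-derivative level.
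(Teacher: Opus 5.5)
Your proposal is correct and follows essentially the same route as the paper. Like the paper, you apply Proposition~\ref{prop:eigenfunction} at $k=K_n$, use kernel splitting plus BDG for $I_n$, prove fixed-$k$ absorption bounds of order $k^{-1}$ for $\partial_k u_k-x\cos(kx)$ and its $k$-derivative, and finish with the $C[0,L]$-valued Sobolev inequality on $\mathcal K_n$. The one cosmetic difference is that you first bound $\|\sup_x|v_k|\|_{L^p}$ and then the difference, whereas the paper absorbs directly in $w_k=v_k-x\cos(kx)$.
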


\begin{proof}
The first estimate is just Proposition~\ref{prop:eigenfunction} applied at the deterministic frequency $k=K_n$. The second estimate follows from BDG, because the integrand in $I_n$ is deterministic and uniformly bounded.

It remains to prove \eqref{eq:local-uk-der-un}. We first note that the pair $(u_k,u_k')$ solves the two-dimensional linear SDE
\[
    \dd Y_x=P_x\,\dd x,\qquad
    \dd P_x=-k^2Y_x\,\dd x+\rho Y_x\,\dd B_x,
    \qquad Y_0=0,\quad P_0=k.
\]
On each compact interval of parameters $k\in[k_0,K_0]$, $k_0>0$, the coefficients and the initial data are smooth in $k$ and globally Lipschitz in $(Y,P)$. Thus we may choose a version for which $u_k(x)$ is twice continuously differentiable in $k$, uniformly for $x\in[0,L]$ almost surely; the derivatives are obtained by differentiating the Volterra equation below. First fix a deterministic $k\ge k_0$. Differentiating the Volterra equation \eqref{eq:volterra}, and writing
\[
    v_k(x):=\partial_k u_k(x),
\]
we get
\[
\begin{aligned}
v_k(x)
={}&x\cos(kx)
-\frac{\rho}{k^2}\int_0^x \sin(k(x-s))u_k(s)\,\dd B_s \\
&+\frac{\rho}{k}\int_0^x (x-s)\cos(k(x-s))u_k(s)\,\dd B_s
+\frac{\rho}{k}\int_0^x \sin(k(x-s))v_k(s)\,\dd B_s .
\end{aligned}
\]
Set
\[
    w_k(x):=v_k(x)-x\cos(kx).
\]
Using the decompositions of $\sin(k(x-s))$ and $\cos(k(x-s))$ into products of functions of $x$ and functions of $s$, BDG and the bound
\[
    \left\|\sup_{x\le L}|u_k(x)|\right\|_{L^p(\Omega)}\le C_p
\]
from Proposition~\ref{prop:eigenfunction} give
\[
\begin{aligned}
\left\|\sup_{x\le L}|w_k(x)|\right\|_{L^p(\Omega)}
&\le C_pk^{-1}+C_pk^{-1}
\left\|\sup_{x\le L}|w_k(x)|\right\|_{L^p(\Omega)} .
\end{aligned}
\]
After absorbing the last term, for all sufficiently large $k$,
\begin{equation}\label{eq:w-fixed-bound}
    \left\|\sup_{x\le L}|\partial_k u_k(x)-x\cos(kx)|\right\|_{L^p(\Omega)}
    \le C_pk^{-1}.
\end{equation}
To pass from a fixed $k$ to the supremum over $\mathcal K_n$, we also need the same type of estimate for the $k$-derivative of $w_k$. For clarity, we write this equation explicitly. Put
\[
    S_k(x,s):=\sin(k(x-s)),\qquad C_k(x,s):=\cos(k(x-s)),
\]
and
\[
    z_k(x):=\partial_k w_k(x).
\]
Since $v_k(s)=s\cos(ks)+w_k(s)$, differentiating the equation for $w_k$ gives

\[
\begin{aligned}
 z_k(x)={}&\frac{2\rho}{k^3}\int_0^x S_k(x,s)u_k(s)\,\dd B_s \\
&-\frac{\rho}{k^2}\int_0^x (x-s)C_k(x,s)u_k(s)\,\dd B_s
 -\frac{\rho}{k^2}\int_0^x S_k(x,s)v_k(s)\,\dd B_s \\
&-\frac{\rho}{k^2}\int_0^x (x-s)C_k(x,s)u_k(s)\,\dd B_s \\
&-\frac{\rho}{k}\int_0^x (x-s)^2S_k(x,s)u_k(s)\,\dd B_s
 +\frac{\rho}{k}\int_0^x (x-s)C_k(x,s)v_k(s)\,\dd B_s \\
&-\frac{\rho}{k^2}\int_0^x S_k(x,s)s\cos(ks)\,\dd B_s \\
&+\frac{\rho}{k}\int_0^x (x-s)C_k(x,s)s\cos(ks)\,\dd B_s
 -\frac{\rho}{k}\int_0^x S_k(x,s)s^2\sin(ks)\,\dd B_s \\
&-\frac{\rho}{k^2}\int_0^x S_k(x,s)w_k(s)\,\dd B_s
 +\frac{\rho}{k}\int_0^x (x-s)C_k(x,s)w_k(s)\,\dd B_s \\
&+\frac{\rho}{k}\int_0^x S_k(x,s)z_k(s)\,\dd B_s .
\end{aligned}
\]
All kernels appearing here are uniformly bounded on $0\le s\le x\le L$, and the fixed-frequency bounds for $u_k$ and $w_k=\partial_k u_k-x\cos(kx)$ also give $\|\sup_{x\le L}|v_k(x)|\|_{L^p(\Omega)}\le C_p$. Hence BDG gives
\[
    \left\|\sup_{x\le L}|z_k(x)|\right\|_{L^p(\Omega)}
    \le C_pk^{-1}+C_pk^{-1}\left\|\sup_{x\le L}|z_k(x)|\right\|_{L^p(\Omega)}.
\]
Absorbing the last term for large $k$ yields
\begin{equation}\label{eq:dw-fixed-bound}
    \left\|\sup_{x\le L}|\partial_k w_k(x)|\right\|_{L^p(\Omega)}
    \le C_pk^{-1}.
\end{equation}
Equivalently,
\[
    \left\|\sup_{x\le L}\left|\partial_k\bigl(\partial_k u_k(x)-x\cos(kx)\bigr)\right|\right\|_{L^p(\Omega)}
    \le C_pk^{-1}.
\]
Now apply the one-dimensional Sobolev inequality in the parameter $k$ on the interval $\mathcal K_n$ to the Banach-valued function
\[
    f(k)=\partial_k u_k(\cdot)-(\cdot)\cos(k\cdot)\in C[0,L].
\]
Taking $L^p(\Omega)$-norms and using Tonelli's theorem, together with \eqref{eq:w-fixed-bound} and \eqref{eq:dw-fixed-bound}, gives
\[
\begin{aligned}
&\left\|\sup_{k\in\mathcal K_n}\sup_{x\le L}
|\partial_k u_k(x)-x\cos(kx)|\right\|_{L^p(\Omega)} \\
&\qquad\le C_p\left(\int_{\mathcal K_n} k^{-p}\,\dd k\right)^{1/p}
     +C_p\left(\int_{\mathcal K_n} k^{-p}\,\dd k\right)^{1/p}
\le C_p n^{-1}.
\end{aligned}
\]
This proves \eqref{eq:local-uk-der-un}.
\end{proof}

\begin{corollary}[Almost-sure eigenfunction asymptotics]
\label{cor:as-eigenfunction}
Let
\[
    K_n:=\frac{n\pi}{L},
    \qquad
    I_n(x):=\int_0^x\sin\left(K_n(x-s)\right)\sin(K_ns)\,\dd B_s.
\]
Here the stochastic integral is taken at the deterministic frequency $K_n$.
Then, for every $\varepsilon>0$,
\[
    \sup_{0\le x\le L}\left|u_{k_n}(x)-\sin(k_nx)-\frac{\rho}{k_n}I_n(x)\right|
    =
    O(n^{-2+\varepsilon})
    \qquad\text{a.s.}
\]
Moreover,
\[
    \sup_{0\le x\le L}|u_{k_n}(x)-\sin(k_nx)|
    =
    O(n^{-1+\varepsilon})
    \qquad\text{a.s.}
\]
\end{corollary}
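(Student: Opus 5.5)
The idea is to work on the good event $G_n$ from the proof of Theorem~\ref{thm:eigenvalue-asymptotics}, where $k_n\in\mathcal K_n=[K_n-1,K_n+1]$. The random frequency $k_n$ is then linked to the deterministic frequency $K_n$ by a mean-value argument in $k$, using the parameter-uniform derivative bound \eqref{eq:local-uk-der-un}. The finite-moment bounds obtained this way are converted into almost-sure statements by Markov's inequality and Borel--Cantelli, exactly as in Corollary~\ref{cor:as-bound}. Since $\Prob(G_n^c)\le C_qn^{-q}$ is summable, almost surely $G_n$ holds for all large $n$, so it suffices to bound the quantities on $G_n$.

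First, on $G_n$ I write
\[
u_{k_n}(x)-\sin(k_nx)=\bigl[u_{K_n}(x)-\sin(K_nx)\bigr]+\int_{K_n}^{k_n}\bigl(\partial_ku_k(x)-x\cos(kx)\bigr)\dd k .
\]
This uses $\partial_k\sin(kx)=x\cos(kx)$ and the $C^1$-in-$k$ version of $u_k$ from Lemma~\ref{lem:eigenfunction-uniform}. Set $D_n':=\sup_{k\in\mathcal K_n}\sup_x|\partial_ku_k(x)-x\cos(kx)|$. The integral term is then bounded by $|k_n-K_n|\,D_n'$ uniformly in $x$. Cauchy--Schwarz, $\|D_n'\|_{L^{2p}}\le C n^{-1}$ and $\|(k_n-K_n)\mathbf 1_{G_n}\|_{L^{2p}}\le Cn^{-1}$ (from the proof of Theorem~\ref{thm:eigenvalue-asymptotics}) show that this term is $O_{L^p}(n^{-2})$.

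Second, I combine this with \eqref{eq:det-rem-un}, which gives
\[
\sup_x\Bigl|u_{k_n}(x)-\sin(k_nx)-\frac{\rho}{K_n}I_n(x)\Bigr|\mathbf 1_{G_n}=O_{L^p}(n^{-2}).
\]
It remains to replace $\rho/K_n$ by $\rho/k_n$. On $G_n$ we have $k_n\ge K_n-1$, so $|1/k_n-1/K_n|\le |k_n-K_n|/(K_n(K_n-1))$. Together with \eqref{eq:det-In-un}, this makes the resulting error $O_{L^p}(n^{-3})$ after Cauchy--Schwarz. Call the full remainder $\mathcal R_n$. For every $p$ we then have $\|\mathcal R_n\mathbf 1_{G_n}\|_{L^p}\le C_pn^{-2}$, and Markov's inequality gives
\[
\Prob(|\mathcal R_n|\mathbf 1_{G_n}>n^{-2+\varepsilon})\le C_pn^{-p\varepsilon}.
\]
Choosing $p>1/\varepsilon$ makes this summable, so Borel--Cantelli, together with the summability of $\Prob(G_n^c)$, yields the first claim.

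For the second claim, $\|\sup_x|I_n|\|_{L^p}\le C_p$, so Markov's inequality and Borel--Cantelli give $\sup_x|I_n(x)|=O(n^{\varepsilon})$ almost surely. Since $k_n\sim n$ almost surely by Corollary~\ref{cor:as-bound}, the term $\rho I_n/k_n$ is $O(n^{-1+\varepsilon})$ almost surely, and the triangle inequality finishes the proof.

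The main obstacle is the first step, the passage from the deterministic $K_n$ to the random, anticipating $k_n$. One cannot apply Proposition~\ref{prop:eigenfunction} at $k=k_n$, because $k_n$ depends on the whole path and the BDG estimates require a deterministic frequency. This is exactly why the supremum over $k\in\mathcal K_n$ inside the expectation in \eqref{eq:local-uk-der-un} is needed. The point to check carefully is that the mean-value integral is legitimate pathwise on $G_n$ for the chosen $C^1$ version, which holds because $k_n\in\mathcal K_n$ there.
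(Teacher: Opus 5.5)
Your proof is correct and follows the paper's argument: the same fundamental-theorem-of-calculus decomposition in $k$ between $K_n$ and $k_n$, controlled by the uniform derivative bound \eqref{eq:local-uk-der-un}, together with the $\rho(1/K_n-1/k_n)I_n$ correction and Markov plus Borel--Cantelli. The only difference is bookkeeping. The paper first turns each ingredient into an almost-sure bound, including $k_n-K_n=O(n^{-1+\varepsilon})$ from Corollary~\ref{cor:as-bound}, and then multiplies these bounds pathwise; you instead combine the pieces in $L^p$ on $G_n$ via Cauchy--Schwarz and apply Borel--Cantelli once at the end.
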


\begin{proof}
Put
\[
    \Delta_n:=k_n-K_n.
\]
By Corollary~\ref{cor:as-bound},
\[
    \Delta_n=O(n^{-1+\varepsilon})
    \qquad\text{almost surely}
\]
for every $\varepsilon>0$. Hence, almost surely, $k_n\in\mathcal K_n$ for all sufficiently large $n$.

By \eqref{eq:det-rem-un}, \eqref{eq:det-In-un}, \eqref{eq:local-uk-der-un}, Markov's inequality and the Borel--Cantelli lemma, for every $\varepsilon>0$,
\begin{align*}
    \sup_{x\le L}\left|u_{K_n}(x)-\sin(K_nx)-\frac{\rho}{K_n}I_n(x)\right|
    &=O(n^{-2+\varepsilon}), \\
    \sup_{x\le L}|I_n(x)|
    &=O(n^{\varepsilon}), \\
    \sup_{k\in\mathcal K_n}\sup_{x\le L}|\partial_k u_k(x)-x\cos(kx)|
    &=O(n^{-1+\varepsilon})
\end{align*}
almost surely.
For such $n$, by the fundamental theorem of calculus in the parameter $k$,
\[
\begin{aligned}
&u_{k_n}(x)-\sin(k_nx)-\frac{\rho}{k_n}I_n(x) \\
&\quad=
\left(u_{K_n}(x)-\sin(K_nx)-\frac{\rho}{K_n}I_n(x)\right) \\
&\qquad+
\int_{K_n}^{k_n}\left(\partial_k u_k(x)-x\cos(kx)\right)\,\dd k
+\rho\left(\frac1{K_n}-\frac1{k_n}\right)I_n(x).
\end{aligned}
\]
Taking the supremum over $x$ gives
\[
\begin{aligned}
&\sup_{x\le L}\left|u_{k_n}(x)-\sin(k_nx)-\frac{\rho}{k_n}I_n(x)\right| \\
&\quad\le O(n^{-2+\varepsilon/3})
+O(n^{-1+\varepsilon/3})O(n^{-1+\varepsilon/3})
+C\,O(n^{-1+\varepsilon/3})n^{-2}O(n^{\varepsilon/3})
=O(n^{-2+\varepsilon}).
\end{aligned}
\]
Here we have applied the preceding almost-sure estimates with $\varepsilon/3$ in place of $\varepsilon$. This proves the first assertion.

The second assertion follows immediately from the first one and \eqref{eq:det-In-un}. Indeed,
\[
    \frac1{k_n}\sup_{x\le L}|I_n(x)|=O(n^{-1+\varepsilon})
    \qquad\text{almost surely},
\]
and therefore
\[
    \sup_{0\le x\le L}|u_{k_n}(x)-\sin(k_nx)|
    =O(n^{-1+\varepsilon})
    \qquad\text{almost surely}.
\]
\end{proof}

\begin{corollary}[Normalized Dirichlet eigenfunction expansion]\label{cor:normalized}
Let \(\varphi_n\) be the Dirichlet eigenfunction associated with \(\lambda_n\), normalized in \(L^2(0,L)\). We choose its sign so that \(\varphi_n'(0)>0\). Put
\[
    s_n(x):=\sin(k_nx),
    \qquad
    A_n:=\int_0^L s_n(t)I_n(t)\,\dd t,
\]
where \(I_n\) is the deterministic-frequency stochastic integral from Corollary~\ref{cor:as-eigenfunction}. Then, for every \(\varepsilon>0\),
\[
\begin{aligned}
&\sup_{0\le x\le L}\bigg|
\varphi_n(x)-\sqrt{\frac{2}{L}}
\left[
    s_n(x)+\frac{\rho}{k_n}\left(I_n(x)-\frac{2}{L}A_n s_n(x)\right)
\right]
\bigg|  \\
&\qquad =O(n^{-2+\varepsilon})
\qquad\text{almost surely}.
\end{aligned}
\]
Equivalently, the normalization constant of the unnormalized eigenfunction \(u_{k_n}\) satisfies
\[
    \|u_{k_n}\|_{L^2(0,L)}^{-1}
    =\sqrt{\frac{2}{L}}
    \left(1-\frac{2\rho}{Lk_n}A_n\right)
    +O(n^{-2+\varepsilon})
    \qquad\text{almost surely}.
\]
In particular,
\[
    \sup_{0\le x\le L}
    \left|\varphi_n(x)-\sqrt{\frac{2}{L}}\sin(k_nx)\right|
    =O(n^{-1+\varepsilon})
    \qquad\text{almost surely}.
\]
\end{corollary}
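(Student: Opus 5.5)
The plan is to compute the $L^2$ norm of $u_{k_n}$ to first order and then divide. Since Dirichlet eigenfunctions are simple, $\varphi_n=u_{k_n}/\|u_{k_n}\|_{L^2(0,L)}$. The sign convention $\varphi_n'(0)>0$ is automatic, because $u_{k_n}'(0)=k_n>0$. Corollary~\ref{cor:as-eigenfunction} lets us write
\[
    u_{k_n}=s_n+\frac{\rho}{k_n}I_n+r_n,
    \qquad
    \sup_x|r_n(x)|=O(n^{-2+\varepsilon})
\]
almost surely. Here $\sup_x|I_n(x)|=O(n^{\varepsilon})$ almost surely, by \eqref{eq:det-In-un}, Markov's inequality and Borel--Cantelli, as in the proof of Corollary~\ref{cor:as-eigenfunction}.

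\textbf{Step 1: expanding the squared norm.} We expand
\[
    \|u_{k_n}\|^2
    =\|s_n\|^2+\frac{2\rho}{k_n}A_n
    +\frac{\rho^2}{k_n^2}\|I_n\|^2
    +2\langle s_n+\tfrac{\rho}{k_n}I_n,r_n\rangle
    +\|r_n\|^2 .
\]
All terms after the first two are $O(n^{-2+\varepsilon})$ almost surely. For the leading term,
\[
    \|s_n\|^2=\frac L2-\frac{\sin(2k_nL)}{4k_n}.
\]
Corollary~\ref{cor:as-bound} gives $|k_n-K_n|=O(n^{-1+\varepsilon})$, and $2K_nL=2n\pi$, so $|\sin(2k_nL)|\le 2L|k_n-K_n|=O(n^{-1+\varepsilon})$. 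Hence $\|s_n\|^2=L/2+O(n^{-2+\varepsilon})$. Also $A_n=O(n^{\varepsilon})$, so that $\frac{2\rho}{k_n}A_n=O(n^{-1+\varepsilon})$. Combining these,
\[
    \|u_{k_n}\|^2=\frac L2\Bigl(1+\frac{4\rho}{Lk_n}A_n\Bigr)+O(n^{-2+\varepsilon}).
\]

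\textbf{Step 2: the normalization constant.} We apply Taylor's expansion $(1+t)^{-1/2}=1-t/2+O(t^2)$ with $t=O(n^{-1+\varepsilon})$. This yields the stated formula
\[
    \|u_{k_n}\|^{-1}=\sqrt{\tfrac2L}\Bigl(1-\tfrac{2\rho}{Lk_n}A_n\Bigr)+O(n^{-2+2\varepsilon}).
\]
Since $\varepsilon$ is arbitrary, we relabel $\varepsilon$ so that the error reads $O(n^{-2+\varepsilon})$.

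\textbf{Step 3: the normalized eigenfunction.} We multiply the expansion of $u_{k_n}$ by this constant. The cross term is
\[
    \frac{\rho}{k_n}I_n\cdot\frac{2\rho}{Lk_n}A_n=O(n^{-2+2\varepsilon}),
\]
and $\sup_x|r_n|$ times a bounded constant is $O(n^{-2+\varepsilon})$. This leaves exactly
\[
    \sqrt{\tfrac2L}\Bigl[s_n+\tfrac{\rho}{k_n}\bigl(I_n-\tfrac2LA_ns_n\bigr)\Bigr]
\]
up to $O(n^{-2+\varepsilon})$. The final "in particular" statement follows because $\frac{\rho}{k_n}\sup_x|I_n-\frac2LA_ns_n|=O(n^{-1+\varepsilon})$.

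\textbf{Main obstacle.} The only delicate point is the leading term $\|s_n\|^2$. The frequency $k_n$ is random rather than equal to $K_n$, so the boundary term $\sin(2k_nL)/(4k_n)$ must be shown to be of second order. This uses the almost-sure eigenvalue bound of Corollary~\ref{cor:as-bound}, as above. Everything else is bookkeeping of almost-sure $O(n^{-1+\varepsilon})$ factors, with $\varepsilon$ rescaled at the end.
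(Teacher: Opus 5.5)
Your proposal is correct and follows essentially the same route as the paper's proof. Both expand $\|u_{k_n}\|_{L^2(0,L)}^2$ via the decomposition of Corollary~\ref{cor:as-eigenfunction}, show $\sin(2k_nL)/(4k_n)=O(n^{-2+\varepsilon})$ using Corollary~\ref{cor:as-bound}, Taylor-expand the inverse square root about $L/2$, and multiply back, rescaling $\varepsilon$ as needed (the paper cites Lemma~\ref{lem:rough-moment} for $\lambda_n>0$ eventually, which you leave implicit in $k_n\to\infty$).
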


\begin{proof}
Almost surely, for all sufficiently large \(n\), \(\lambda_n>0\), and hence \(\varphi_n=u_{k_n}/\|u_{k_n}\|_{L^2(0,L)}\) with the prescribed sign. By Corollary~\ref{cor:as-eigenfunction},
\[
    u_{k_n}(x)=s_n(x)+\frac{\rho}{k_n}I_n(x)+R_n(x),
    \qquad
    \sup_{x\le L}|R_n(x)|=O(n^{-2+\varepsilon})
    \quad\text{a.s.}
\]
Moreover, by the BDG estimate \eqref{eq:det-In-un} and Borel--Cantelli,
\[
    \sup_{x\le L}|I_n(x)|=O(n^{\varepsilon})
    \qquad\text{a.s.}
\]
Consequently
\[
    |A_n|
    \le \|s_n\|_{L^1(0,L)}\sup_{x\le L}|I_n(x)|
    =O(n^{\varepsilon})
    \qquad\text{a.s.}
\]
We first expand the normalization factor. Since \(k_nL=n\pi+O(n^{-1+\varepsilon})\) almost surely,
\[
\begin{aligned}
    \|s_n\|_{L^2(0,L)}^2
    &=\int_0^L\sin^2(k_nx)\,\dd x
      =\frac{L}{2}-\frac{\sin(2k_nL)}{4k_n}
      =\frac{L}{2}+O(n^{-2+\varepsilon})
      \qquad\text{a.s.}
\end{aligned}
\]
Using the expansion of \(u_{k_n}\), we get
\[
\begin{aligned}
\|u_{k_n}\|_{L^2(0,L)}^2
&=\|s_n\|_{L^2(0,L)}^2
+\frac{2\rho}{k_n}\int_0^L s_n(t)I_n(t)\,\dd t
+O(n^{-2+\varepsilon})  \\
&=\frac{L}{2}+\frac{2\rho}{k_n}A_n+O(n^{-2+\varepsilon})
\qquad\text{a.s.}
\end{aligned}
\]
Here the terms involving \(R_n\) are \(O(n^{-2+\varepsilon})\). The quadratic term is also harmless: applying the bound \(\sup_x|I_n(x)|=O(n^{\varepsilon/3})\) gives
\[
    k_n^{-2}\|I_n\|_{L^2(0,L)}^2=O(n^{-2+2\varepsilon/3})=O(n^{-2+\varepsilon}).
\]
Since \(k_n^{-1}A_n=O(n^{-1+\varepsilon})\), Taylor expansion of \(x\mapsto x^{-1/2}\) at \(L/2\) gives
\[
    \|u_{k_n}\|_{L^2(0,L)}^{-1}
    =\sqrt{\frac{2}{L}}
    \left(1-\frac{2\rho}{Lk_n}A_n\right)
    +O(n^{-2+\varepsilon})
    \qquad\text{a.s.}
\]
Multiplying this expansion by
\[
    u_{k_n}=s_n+\frac{\rho}{k_n}I_n+R_n
\]
and using again \(\sup_x |I_n(x)|=O(n^{\varepsilon})\), \(|A_n|=O(n^{\varepsilon})\), and \(\sup_x|R_n(x)|=O(n^{-2+\varepsilon})\), we obtain
\[
\begin{aligned}
\varphi_n(x)
&=\sqrt{\frac{2}{L}}
\left(1-\frac{2\rho}{Lk_n}A_n\right)
\left(s_n(x)+\frac{\rho}{k_n}I_n(x)\right)
+O(n^{-2+\varepsilon}) \\
&=\sqrt{\frac{2}{L}}
\left[
    s_n(x)+\frac{\rho}{k_n}
    \left(I_n(x)-\frac{2}{L}A_n s_n(x)\right)
\right]
+O(n^{-2+\varepsilon})
\end{aligned}
\]
uniformly for \(0\le x\le L\). This proves the first-order normalized expansion. The final rough estimate follows because
\[
    \frac{1}{k_n}\sup_{x\le L}|I_n(x)|=O(n^{-1+\varepsilon}),
    \qquad
    \frac{|A_n|}{k_n}=O(n^{-1+\varepsilon})
    \qquad\text{a.s.}
\]
\end{proof}

\section{Discussion}

The finite-interval white-noise Schr\"odinger operator is well-defined pathwise through the quasi-derivative formalism. Its basic spectral properties follow from the regular theory of Sturm--Liouville operators with distributional potentials in \(H^{-1}(0,L)\). It has been shown that the stochastic Pr\"ufer equation gives a convenient high-energy description of the spectrum. Although the detailed asymptotic theorems in the present paper are proved for the Dirichlet problem, the method extends to general separated self-adjoint boundary conditions after adding the endpoint boundary phases described in Section~3.2. In such cases the leading Brownian correction still comes from the oscillatory integral in the interior, while the deterministic and random endpoint phases contribute additional lower-order boundary terms depending on the separated parameters and on $\sigma(0),\sigma(L)$. We have not pursued this bookkeeping here. However, coupled boundary conditions require a different analysis. Although their eigenvalues can often be localized between eigenvalues of nearby separated boundary value problems by comparison or interlacing arguments, such bounds are too coarse to recover the stochastic correction terms obtained here. In particular, the correction scale for \(k_n\) is \(O(n^{-1})\), or more softly \(O(n^{-1+\varepsilon})\) almost surely for every \(\varepsilon>0\), whereas interlacing with adjacent separated problems typically loses information at the level of the spacing between neighboring free frequencies. A sharp expansion for coupled conditions would require an analysis of the corresponding transfer matrix or characteristic determinant, and may involve paired eigenvalues as in the periodic and anti-periodic problems.

The almost-sure bounds proved here are deliberately obtained by a robust finite-moment and Borel--Cantelli argument. Since the leading terms are Gaussian stochastic integrals with deterministic oscillatory integrands, one may expect logarithmic refinements, or even law-of-the-iterated-logarithm type statements along subsequences, under a more detailed analysis of the correlations in $n$. Such refinements are not needed for the present high-energy expansion and are left for future work.

Moreover, we give some remarks on the stochastic characteristics of the asymptotic results.

\begin{remark}[Random structure of the high-energy spectrum and eigenfunctions]
The sequence $(k_n(\omega))_{n\ge1}$ may be viewed as a random point
configuration on $\mathbb R_+$,
\[
    \xi_\omega=\sum_{n\ge1}\delta_{k_n(\omega)}.
\]
It is not a Poisson point process.  Indeed, the almost-sure estimate
\[
    k_n=\frac{n\pi}{L}+O(n^{-1+\varepsilon})
    \qquad\text{a.s.}
\]
implies
\[
    k_{n+1}-k_n=\frac{\pi}{L}+O(n^{-1+\varepsilon})
    \qquad\text{a.s.}
\]
Hence the unfolded spacings satisfy
\[
    \frac{L}{\pi}(k_{n+1}-k_n)\to1
    \qquad\text{a.s.}
\]
This is a rigid, clock-type high-energy behaviour, in contrast with a
Poisson point process, whose spacings are exponentially distributed.

Nor should $(k_n)$ be regarded as a L\'evy process.  Even if the index $n$
is interpreted as a discrete time parameter, the increments
$k_{n+1}-k_n$ are neither stationary nor independent; all eigenvalues are
functions of the same Brownian path on $[0,L]$.

The eigenvalues $k_n$ themselves are random variables, not martingales,
because no natural filtration time is attached to the spectral index $n$.
Nevertheless, their leading fluctuations are expressed through the martingales
\[
    M_x(k)=\int_0^x\sin^2(ks)\,\dd B_s,
\]
defined first for deterministic values of the parameter $k$.  For each such
$k$, the process $x\mapsto M_x(k)$ is a continuous martingale with quadratic
variation
\[
    \langle M(k)\rangle_x=\int_0^x\sin^4(ks)\,\dd s.
\]
In the eigenvalue expansion,
\[
    n\left(k_n-\frac{n\pi}{L}\right)
    =
    \frac{\rho}{\pi}M_L(K_n)+O_{L^p}(n^{-1}),
    \qquad K_n=\frac{n\pi}{L},
\]
so the leading fluctuation is represented by deterministic-frequency Brownian
stochastic integrals.  This formulation avoids interpreting
$\int_0^L\sin^2(k_ns)\,\dd B_s$ as an anticipating It\^o integral.  The
fluctuations for different $n$ are generally correlated because the same
Brownian path drives all frequencies.  Thus the high-energy spectrum is better
described as a small correlated stochastic perturbation of the deterministic
lattice $(n\pi/L)_{n\ge1}$, rather than as a Poisson or L\'evy process.

Similarly, the eigenfunction $u_k(x)$ is not a martingale in $x$.  It
satisfies
\[
    \dd Y_x=P_x\,\dd x,\qquad
    \dd P_x=-k^2Y_x\,\dd x+\rho Y_x\,\dd B_x,
\]
so both components contain drift terms.  The first-order correction
\[
    I_k(x)=\int_0^x\sin(k(x-s))\sin(ks)\,\dd B_s
\]
is a Volterra stochastic integral rather than a martingale.  If
\[
    J_k(x)=\int_0^x\cos(k(x-s))\sin(ks)\,\dd B_s,
\]
then
\[
    \dd I_k(x)=kJ_k(x)\,\dd x,\qquad
    \dd J_k(x)=\sin(kx)\,\dd B_x-kI_k(x)\,\dd x.
\]
Thus the fluctuation of the eigenfunction is martingale-driven, but the
eigenfunction itself is not a martingale.
\end{remark}

Natural continuations of the present work include a full treatment of general separated boundary conditions, including Neumann and Robin endpoints, colored-noise approximations converging to the white-noise model, and higher-order stochastic asymptotics. Another possible direction is to use the high-mode eigenfunction expansions obtained here in random spectral-decomposition problems on a fixed interval.


\begin{thebibliography}{99}

\bibitem{BalGuReview}
G. Bal and Y. Gu,
\newblock Limiting models for equations with large random potential; a review,
\newblock \emph{Commun. Math. Sci.} \textbf{13} (2015), no. 3, 729--748.

\bibitem{Billingsley1999}
P. Billingsley,
\newblock \emph{Convergence of Probability Measures}, second edition,
\newblock Wiley Series in Probability and Statistics, John Wiley \& Sons, New York, 1999.

\bibitem{CarmonaLacroix}
R. Carmona and J. Lacroix,
\newblock \emph{Spectral Theory of Random Schr\"odinger Operators},
\newblock Birkh\"auser, 1990.

\bibitem{Davydov1968}
Yu. A. Davydov,
\newblock Convergence of distributions generated by stationary stochastic processes,
\newblock \emph{Theory Probab. Appl.} \textbf{13} (1968), no. 4, 691--696.

\bibitem{DoukhanMassartRio1994}
P. Doukhan, P. Massart, and E. Rio,
\newblock The functional central limit theorem for strongly mixing processes,
\newblock \emph{Ann. Inst. H. Poincar\'e Probab. Statist.} \textbf{30} (1994), no. 1, 63--82.

\bibitem{DuYuan2006}
L. Du and X. Yuan,
\newblock Invariant tori of nonlinear Schr\"odinger equations with a given potential,
\newblock \emph{Dyn. Partial Differ. Equ.} \textbf{3} (2006), no. 4, 331--346.

\bibitem{DumazLabbe2020}
L. Dumaz and C. Labb\'{e},
\newblock Localization of the continuous Anderson Hamiltonian in 1-D,
\newblock \emph{Probab. Theory Related Fields} \textbf{176} (2020), 353--419.

\bibitem{DumazLabbe2023}
L. Dumaz and C. Labb\'{e},
\newblock The delocalized phase of the Anderson Hamiltonian in 1-D,
\newblock \emph{Ann. Probab.} \textbf{51} (2023), 805--839.

\bibitem{DumazLabbe2024Crossover}
L. Dumaz and C. Labb\'{e},
\newblock Localization crossover for the continuous Anderson Hamiltonian in 1-D,
\newblock \emph{Invent. Math.} \textbf{235} (2024), 345--440.

\bibitem{DumazLabbe2024}
L. Dumaz and C. Labb\'{e},
\newblock Anderson localization for the 1-D Schr\"{o}dinger operator with white noise potential,
\newblock \emph{J. Funct. Anal.} \textbf{286} (2024), no. 1, 110191.

\bibitem{EGNT}
J. Eckhardt, F. Gesztesy, R. Nichols, and G. Teschl,
\newblock Weyl--Titchmarsh theory for Sturm--Liouville operators with distributional potentials,
\newblock \emph{Opuscula Math.} \textbf{33} (2013), 467--563.

\bibitem{FukushimaNakao}
M. Fukushima and S. Nakao,
\newblock On spectra of the Schr\"odinger operator with a white Gaussian noise potential,
\newblock \emph{Z. Wahrscheinlichkeitstheorie verw. Gebiete} \textbf{37} (1977), 267--274.

\bibitem{HrynivMykytyuk2006}
R. O. Hryniv and Ya. V. Mykytyuk,
\newblock Inverse spectral problems for Sturm--Liouville operators with singular potentials. IV. Potentials in the Sobolev space scale,
\newblock \emph{Proc. Edinburgh Math. Soc.} \textbf{49} (2006), 309--329.

\bibitem{HrynivMykytyukJFA2006}
R. O. Hryniv and Ya. V. Mykytyuk,
\newblock Eigenvalue asymptotics for Sturm--Liouville operators with singular potentials,
\newblock \emph{J. Funct. Anal.} \textbf{238} (2006), no. 1, 27--57.

\bibitem{HrynivMykytyuk2012}
R. O. Hryniv and Ya. V. Mykytyuk,
\newblock Self-adjointness of Schr\"odinger operators with singular potentials,
\newblock \emph{Methods Funct. Anal. Topology} \textbf{18} (2012), no. 2, 152--159.

\bibitem{KampfSpodarev2018}
J. Kampf and E. Spodarev,
\newblock A functional central limit theorem for integrals of stationary mixing random fields,
\newblock \emph{Theory Probab. Appl.} \textbf{63} (2018), no. 1, 135--150.

\bibitem{Kirsch2008}
 W. Kirsch,
  \newblock An Invitation to Random Schr{\"o}dinger Operators,
  \newblock \emph{Panoramas et Synth{\`e}ses} \textbf{25} (2008), 1--119.

\bibitem{Kuksin1993}
S. B. Kuksin,
\newblock \emph{Nearly Integrable Infinite-Dimensional Hamiltonian Systems},
\newblock Lecture Notes in Mathematics, Vol. 1556, Springer, Berlin, 1993.

\bibitem{KuksinPoeschel1996}
S. B. Kuksin and J. P\"oschel,
\newblock Invariant Cantor manifolds of quasi-periodic oscillations for a nonlinear Schr\"odinger equation,
\newblock \emph{Ann. of Math.} \textbf{143} (1996), no. 1, 149--179.

\bibitem{Minami}
N. Minami,
\newblock Definition and self-adjointness of the stochastic Airy operator,
\newblock \emph{Markov Process. Relat. Fields} \textbf{21} (2015), no. 3, 695--711.

\bibitem{PasturFigotin}
L. Pastur and A. Figotin,
\newblock \emph{Spectra of Random and Almost-Periodic Operators},
\newblock Springer, 1992.

\bibitem{Peng2000}
S. Peng,
\newblock Problem of eigenvalues of stochastic Hamiltonian systems with boundary conditions,
\newblock \emph{Stochastic Process. Appl.} \textbf{88} (2000), no. 2, 259--290.

\bibitem{RRV}
J. A. Ram\'irez, B. Rider, and B. Vir\'ag,
\newblock Beta ensembles, stochastic Airy spectrum, and a diffusion,
\newblock \emph{J. Amer. Math. Soc.} \textbf{24} (2011), no. 4, 919--944.

\bibitem{SavchukShkalikov}
A. M. Savchuk and A. A. Shkalikov,
\newblock Sturm--Liouville operators with distribution potentials,
\newblock \emph{Trans. Moscow Math. Soc.} \textbf{2003}, 143--192.

\bibitem{Thompson1983}
M. Thompson,
\newblock The state density for second order ordinary differential equations with white Gaussian noise potential,
\newblock \emph{Boll. Un. Mat. Ital. B} \textbf{2} (1983), 283--296.

\bibitem{Yuan2006WavePotential}
X. Yuan,
\newblock Quasi-periodic solutions of nonlinear wave equations with a prescribed potential,
\newblock \emph{Discrete Contin. Dyn. Syst.} \textbf{16} (2006), no. 3, 615--634.

\end{thebibliography}
\end{document}